\numberwithin{equation}{section}
\newtheorem{theorem}{Theorem}[section]
\newtheorem{lemma}[theorem]{Lemma}
\newtheorem{prop}[theorem]{Proposition}
\newtheorem{coro}[theorem]{Corollary}
\theoremstyle{definition}
\newtheorem{definition}[theorem]{Definition}
\newtheorem{remark}[theorem]{Remark}
\theoremstyle{plain}
\newcommand{\bbC}{{\mathbbm{C}}}
\newcommand{\bbN}{{\mathbbm{N}}}
\newcommand{\bbR}{{\mathbbm{R}}}
\newcommand{\bbT}{{\mathbbm{T}}}
\newcommand{\bbZ}{{\mathbbm{Z}}}
\newcommand{\Z}{{\mathbbm{Z}}}
\newcommand{\C}{{\mathbbm{C}}}
\newcommand{\N}{{\mathbbm{N}}}
\newcommand{\calE}{{\mathcal{E}}}
\newcommand{\calG}{{\mathcal{G}}}
\newcommand{\calV}{{\mathcal{V}}}
\newcommand{\scrF}{{\mathscr{F}}}
\newcommand{\scrH}{{\mathscr{H}}}
\newcommand{\iop}{\mathbf{i}}
\newcommand{\tcp}{\widetilde{\mathcal{P}}}
\newcommand{\subscript}[2]{$#1 _ #2$}
\DeclareMathOperator{\diag}{{diag}}
\newcommand{\polym}{{\C[x_1,\ldots,x_m]}}
\newcommand{\laurm}{{\C[x_1^{\pm 1}, \ldots, x_m^{\pm 1}]}}
\definecolor{purple}{rgb}{.5,0,1}
\definecolor{green}{rgb}{0,.5,0}
\begin{document}
\title[Fermi Varieties of Periodic Graph Operators]{Algebraic Properties of the Fermi Variety for Periodic Graph Operators}
\author[J.\ Fillman]{Jake Fillman}
\email{fillman@txstate.edu}
\address{Department of Mathematics, Texas State University, San Marcos, TX 78666, USA}
\thanks{J.\ F.\ was supported in part by Simons Foundation Collaboration Grant \#711663 and NSF grant DMS--2213196. W.\ L.\  was  supported by NSF  DMS-2000345, DMS-2052572 and DMS-2246031. }

\author[W.\ Liu]{Wencai Liu}
\email{liuwencai1226@gmail.com; wencail@tamu.edu}
\address{Department of Mathematics, Texas A\&M University, College Station TX, 77843, USA.}

\author[R.\ Matos]{Rodrigo Matos}
\email{matosrod@tamu.edu}
\address{Department of Mathematics, Texas A\&M University, College Station TX, 77843, USA.}

\maketitle

\begin{abstract}
We present a method to estimate the number of irreducible components of the Fermi varieties of periodic Schr\"odinger operators on graphs in terms of suitable asymptotics. Our main theorem is an abstract bound for the number of irreducible components of Laurent polynomials in terms of such asymptotics.
We then show how the abstract bound implies irreducibility in many lattices of interest,
including examples with more than one vertex in the fundamental cell such as the Lieb lattice as well as certain models obtained by the process of graph decoration.  
\end{abstract}


\hypersetup{
	linkcolor={black!30!blue},
	citecolor={red},
	urlcolor={black!30!blue}
}

\section{Introduction} 
In recent decades, there  has been intense activity on the algebraic and analytic properties  of the Bloch and Fermi varieties of periodic Schr\"odinger operators. Such  properties are related to many problems of interest, including the absence of embedded eigenvalues in continuous settings \cite{kvcpde20} and discrete settings \cite{kv06cmp,LiuPreprint:Irreducibility}, the existence of eigenfunctions of unbounded support in locally perturbed periodic media \cite{shi1}, properties of spectral band functions \cite{LiuPreprint:Irreducibility},  inverse spectral problems \cite{liu2d,LiuPreprint:fermi, liufloquet23, GKTBook},  quantum ergodicity \cite{liuqua22,ms22},  flat bands \cite{BGM2022IMRN, KFSH2020CMP, KS2014JMAA, sabri2023flat}, and ballistic motion \cite{BdMS2022ballistic, DLY, F2021Ballistic}. For additional information and background see, e.g., the surveys \cite{Kuchment2016BAMS, kuchment2023analytic, liu2021topics}.

 The study of irreducibility of Fermi and Bloch varieties started about 30 years ago. B\"atting, Gieseker, Kn\"{o}rrer, and Trubowitz studied this question for periodic Schr\"odinger operators in dimensions $d\in\{2,3\}$ by compactification methods \cite{GKTBook, batcmh92, battig1988toroidal}. When  $d=2$,  the irreducibility of the Bloch  variety was  proved by B{\"a}ttig ~\cite{battig1988toroidal}. In ~\cite{GKTBook}, Gieseker, Kn\"orrer and Trubowitz proved that the Fermi variety is irreducible except for finitely many values of $\lambda$. When  $d=3$, B\"{a}ttig  proved that the Fermi variety is irreducible for every $\lambda$~\cite{batcmh92}.\par
 For continuous  periodic Schr\"odinger operators, Kn\"orrer and Trubowitz proved that  the Bloch variety is irreducible (modulo periodicity) when $d=2$ ~\cite{ktcmh90}. When the periodic  potential is separable, B{\"a}ttig, Kn\"orrer and Trubowitz proved that the Fermi variety at any level is irreducible (modulo periodicity) for $d=3$ ~\cite{bktcm91}.\par
 For further developments on these topics, see \cite{ls} for the case of planar periodic graphs and \cite{faust2023irreducibility} for a work which explores the connection with discrete geometry.
 
 Recently, one of the authors introduced a novel  approach  to study  the irreducibility of polynomials, obtaining several new results for periodic operators of the form $-\Delta+V$ on $\ell^2\left(\mathbb{Z}^d\right)$. In this case, Liu  proved that for $d=2$, the Fermi variety is irreducible at every energy level $\lambda$ except for, possibly, the average energy level. He also proved that when $d\geq 3$, the Fermi variety is irreducible for every level $\lambda$ \cite{LiuPreprint:Irreducibility}.
 In particular, for such operators it follows that the Bloch variety is irreducible in arbitrary dimension \cite{LiuPreprint:Irreducibility}. 
 The results in ~\cite{LiuPreprint:Irreducibility} provide a  complete proof for conjectures about the  irreducibility  of  Fermi and Bloch  varities in the discrete settings, as discussed in numerous articles~\cite{Kuchment2016BAMS,kvcpde20,GKTBook,ktcmh90,bktcm91,batcmh92}.

On a technical level, the approach in \cite{LiuPreprint:Irreducibility} can be divided into several steps. 
 \begin{enumerate}
     \item[1.] Show that the variety associated with every irreducible factor contains certain singular points in the Riemann sphere.
     \item[2.] Calculate the asymptotics (tangent cones after changing coordinates of variables) of the Fermi variety at those singular points.
     \item[3.] Prove that the asymptotics themselves are irreducible.
     \item[4.]  Use degree arguments to complete the proof. 
 \end{enumerate}

Our previous work \cite{FLM1} studied the Bloch variety associated to certain $\bbZ^d$-periodic graphs and Schr\"odinger operators given by the sum of a multiplication operator (the potential) and a Toeplitz operator (governing the site-to-site interactions). 
Following the approach outlined above, we introduced new ideas to study the Bloch varieties associated to such operators on single-vertex models, that is, $\bbZ^d$-periodic graphs in which the action is transitive on the vertices (i.e., there is a single vertex orbit under the $\bbZ^d$ action). 
Compared to the irreducibility of Fermi varieties, we were able to work with asymptotics that also depend on the spectral parameter $\lambda$ in a specific way. 
As a result we proved the irreducibility of Bloch variety for a large family of periodic graph operators~\cite{FLM1}.

The key idea  in both works \cite{LiuPreprint:Irreducibility,FLM1} is to find a way to reduce the problem to one in which the potential disappears. 
Indeed, the dependence of the full dispersion relation on the potential can be very delicate, so this is exactly the role of the asymptotic terms. 
In both cases, the approach is to expand the dispersion relation in terms of the symbol of the Toeplitz operator governing the site-to-site interactions. 
However, there are a couple of drawbacks to their results. 
First,  \cite{LiuPreprint:Irreducibility} only focuses on single-vertex models;
many models of physical interest such as graphene, the Lieb lattice and Kagome lattices and others do not fall into the single-vertex category.
Secondly, the arguments in \cite{FLM1} are only effective in studying irreducibility of the dispersion relation itself (equivalently, irreducibility of the \emph{Bloch variety}). 
However, the most interesting irreducibility results are for the dispersion relation with fixed energy, which corresponds to the \emph{Fermi variety}. 
The present paper thus develops a method to study the dispersion relation at fixed energy.
In particular, the technique here gives new results, even for the single-vertex case.

We now give context to the present manuscript and point out several difficulties inherent to the setting of many-vertex models which had to be overcome by us in order to implement the strategy outlined above in steps 1-4. We also comment on the importance of the models studied here for the mathematics and physics of periodic structures.

Our goal in this note is to present irreducibility criteria which can be applied to many-vertex models, that is, periodic lattices with more than one vertex in their fundamental cell.  Although the general scheme of the proof follows \cite{LiuPreprint:Irreducibility}, the framework and its implementation for specific examples are significantly more challenging. The many-vertex context is typically introduced to interpret physical systems which either contain many particles or allow for internal degrees of freedom such as spins, energy levels, and other physical parameters. Therefore, the scope of Hamiltonians which arise in this study, even when restricted to the $\mathbb{Z}^d$-periodic setting, is wide and also corresponds to a diverse range of physical phenomena.

One guiding example for the results presented here is the Lieb lattice, which is ubiquitous in nanostructures and exhibits exotic flat band structures. This lattice is of interest to ferromagnetism, superconductivity and fractional quantum Hall effects; see \cite{jianghuangliu, physicsnano} and references therein. A finite portion of the two dimensional Lieb lattice with three vertices in the fundamental cell is shown in Figure~\ref{fig:Liebd2} below. 

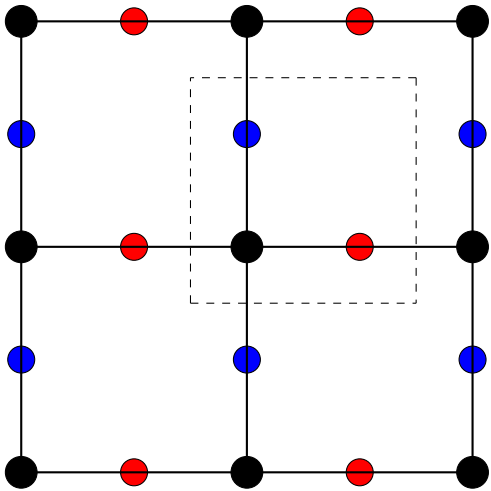
\begin{figure}[b!]

\begin{tikzpicture}[scale=3]
\def\width{2};
\def \length {2};
\foreach \m in {0,...,\width}
   \foreach \p in {0,...,\length}
        \draw[fill=black] (\m,\p) circle (2pt);
        \foreach \m in {0,...,\numexpr\width-1}
         \foreach \p in {0,...,\numexpr\length}
        \draw[fill=red] (\m+0.5,\p) circle (1.7pt);
        \foreach \m in {0,...,\numexpr\width}
         \foreach \p in {0,...,\numexpr\length-1}
        \draw[fill=blue] (\m,\p+0.5) circle (1.7pt);
\foreach \m in {0,...,\numexpr\width }
    \draw[thick] (\m,0) -- (\m , \width);
    \foreach \p in {0,...,\numexpr\width }
    \draw[thick] (\width,\p) -- (0, \p);
\draw[dashed] (.75,.75) -- (.75,1.75);
\draw[dashed] (.75,.75) -- (1.75,.75);
\draw[dashed] (1.75,1.75) -- (.75 ,1.75);
\draw[dashed] (1.75,1.75) -- (1.75 ,.75);

\end{tikzpicture}
\caption{A finite portion of the two dimensional Lieb lattice. A fundamental cell is show with dashed lines.}
\label{fig:Liebd2}
\end{figure}
Many-vertex models also naturally arise in the process of graph decoration. While this process may produce drastic effects on the spectrum of the underlying adjacency operator, such as the creation of spectral gaps \cite{AS}, we show below that in many examples of interest irreducibility is unaffected by decoration procedures.  In this way, we are able to present new irreducibility results for a class of examples of interest to mathematical physics. We now comment on some technical merits of the approach developed in this manuscript.\par
A first contribution of the present work is to present unified criteria for irreducibility. Indeed, the main results of Theorems \ref{thm1} and \ref{thm2} below include both types of examples discussed above and also recover known results for single-vertex models in which the action by $\Z^d$ has a single vertex orbit. In short, we show that in many situations of interest the Fermi variety is irreducible for all but finitely many energies.
The criteria obtained here reduces the question of counting irreducible factors of the Fermi variety to counting factors of suitable asymptotics, which is achieved in Theorems \ref{thm1} and \ref{thm2}.

We also emphasize that our main results of Theorems \ref{thm1} and \ref{thm2} are abstract and possibly of independent interest. Moreover, these results are not only applicable to the study of irreducibility but, rather, they bound the number of irreducible components of a polynomial in terms of the number of components of two of its asymptotics whenever the appropriate conditions are met. Within the realm of applications to irreducibility of the Fermi variety of periodic operators, the operators considered here are vector-valued, and their dispersion relations may depend non-trivially on the values of the potential at every site of the fundamental domain and also on the spectral parameter in a more delicate fashion than in the single-vertex case. In particular, the potential values and spectral parameter $\lambda$ appear in the tangent cone asymptotics in a highly nontrivial fashion, in contrast to \cite{LiuPreprint:Irreducibility} and \cite{FLM1}, where the asymptotics are independent of the potential and of the spectral parameter as well. In fact, this dependence explains one of the crucial technical challenges of this work.

The rest of the paper is laid out as follows. In Section~\ref{Sec:setting}, we describe the general setting of our abstract results. The proofs of those abstract theorems are then given in Section~\ref{Sec:thmproofs}. Since we consider many-vertex models, we need to use the matrix-valued version of Floquet theory, which we describe in Section~\ref{Sec:Floquet}. Finally, we give our main applications to Lieb and decorated lattices in Sections~\ref{Sec:Lieb} and \ref{Sec:decorated}, respectively.

\addtocontents{toc}{\protect\setcounter{tocdepth}{0}}

\section*{Acknowledgements} The authors gratefully acknowledge support from the Simons Center for Geometry and Physics for the Workshop on ``Ergodic operators and quantum graphs'' and ICERM for the Hot Topics workshop on ``Algebraic Geometry in Spectral Theory'', at which some of this work was done.

\addtocontents{toc}{\protect\setcounter{tocdepth}{1}}

\section{Setting and Statement of Main Results}\label{Sec:setting}

 Let $\C^{\star}=\C\backslash\{0\}$,  $\overline{\C} = \C \cup \{\infty\}$, and for $m \in \N$, let $\polym$ (respectively $\laurm$) denote the set of polynomials (respectively Laurent polynomials) in $m$ variables. We denote a typical element of $\laurm$ by
\begin{equation} \label{eq:typicalLaurentDef}
    f(x) = \sum_{\alpha \in \Z^m} c_\alpha x^\alpha, \quad x \in (\C^\star)^m,
\end{equation}
where 
$$x^\alpha = x_1^{\alpha_1} \cdots x_m^{\alpha_m}, \text{ for } x \in (\C^\star)^m, \ \alpha \in \Z^m.$$  
As usual, $f \in \laurm$ is a \emph{unit} if and only if $1/f \in \laurm$, which in turn holds if and only if $f$ is a nonzero monomial: $f(x) = cx^\alpha$ for some $c \neq 0$ and $\alpha \in \bbZ^d$. If $f \not\equiv 0$ is not a unit, it is \emph{irreducible} if it cannot be factored nontrivially. Equivalently, $f$ is irreducible if and only if $f=gh$ implies that one of $g,h$ must be a unit. Given $f \in \laurm$, we let 
\[V(f) = \{x \in (\C^\star)^m: f(x)=0\}\]
denote the variety associated to $f$, and we say that $f$ \emph{meets} $y \in \overline{\C}^m$ if the closure of $V(f)$ contains $y$ (with the closure being taken with respect to the usual product topology on $\overline{\C}^m$).  Define
$$\mathbf{0}_k= (\underbrace{0,0,\ldots,0}_{k \text{ copies}}),$$
and notice that $f$ meets $(\mathbf{0}_{m-1},\infty)$ if and only if $f(\hat{x})$ meets $\mathbf{0}_m$, where $\hat{x} = (x_1,x_2,\ldots,x_{m-1},x_m^{-1})$. We also denote pointwise operations via 
\begin{equation}
    x \odot y = (x_1y_1,\ldots,x_m y_m) \quad \text{ and } \quad x^{\odot \alpha} = (x_1^{\alpha_1}, \ldots,x_m^{\alpha_m}).
\end{equation}

Given a Laurent polynomial as in \eqref{eq:typicalLaurentDef}, set
\begin{equation} \label{eq:calADef}
    \mathcal{A} = \mathcal{A}(f) = \{\alpha \in \bbZ^m : c_\alpha \neq 0\}.
\end{equation} We define $\alpha_{\min}(j)=\alpha_{\min,f}(j)=\min\{\alpha_j\,\,:\,\,\alpha\in \mathcal{A}\}$, that is, $\alpha_{\min}(j)$ is the lowest exponent of $x_j$ in $f(x)$. We also introduce \begin{equation} \label{eq:alpha0jdef}
    \alpha_{0}(j)
    =\alpha_{0,f}(j)
    =\max\{-\alpha_{\min,f}(j),0\}
\end{equation} so that
\begin{equation}\label{eq:minexp}\alpha_0
=\alpha_{0,f}
:= \left(\alpha_{0,f}(1),\ldots,\alpha_{0,f}(m)\right)
\end{equation} 
is the vector in $\bbZ_+^{m}$ with smallest length such that  $f^{+}(x):=x^{\alpha_0}f(x)$ is a polynomial. The interplay between (ir)reducibility properties of $f(x)$ (as a Laurent polynomial) and $f^{+}(x)$ (as a polynomial) plays a key role in this manuscript. In particular, let us note that $f$ is irreducible as a Laurent polynomial if and only if $f^+$ has only one irreducible factor that is not a monomial. This motivates the following definitions.

\begin{definition}\label{def:properpol}
	A \emph{nontrivial monomial} is a Laurent polynomial of the form $cx^\alpha$ where $c \neq 0$ and $\alpha \neq \mathbf{0}_m$. If $cx^\alpha$ is nontrivial and $\alpha_j \geq 0$ for all $j$, we call $cx^\alpha$ a \emph{positive\footnote{Obviously, positive refers to the vector of exponents, not to the range of values taken by the monomial.} monomial}. 
	The degree of $f$ is defined as
	$\deg (f)=\alpha_1+\alpha_2+\cdots +\alpha_m$. 
	Abusing notation slightly, we also denote $\deg(\alpha) = \alpha_1 + \alpha_2 + \cdots +\alpha_m$ for the multi-index $\alpha = (\alpha_1,\ldots,\alpha_m) \in \bbZ^m$.
  We say that $f \in \polym$ is a \emph{proper} polynomial if $f$ has no positive monomial factors. 
 \end{definition}

\begin{definition}\label{lowestdfn}
	Given a Laurent polynomial
	\[f(x)= \sum_{\alpha} c_\alpha x^\alpha,\]
	let $L_- = \min\{\deg(\alpha) : c_\alpha \neq 0\}$. Then,  the \emph{lowest degree component} of $f$ is defined to be the Laurent polynomial 
	\[\underline{f}(x)=\sum_{\deg \alpha =L_{-}}c_\alpha x^{\alpha}.\]
 We sometimes refer to this simply as the \emph{lowest component} of $f$. 

 More generally, if ${l} \in  \bbZ^m \setminus\{0\}$, then the ${l}$-degree of $x^\alpha$ and $\alpha$ are defined by 
 \begin{equation} \deg_{{l}}(x^\alpha) 
 = \deg_{{l}}(\alpha) 
 := \deg(x^{{l}\odot \alpha})
 = \deg({l}\odot \alpha)
 = \sum_{j=1}^m l_j\alpha_j.
 \end{equation}
 One can then put $L_-({{l}}) = \min\{\deg_{{l}}(\alpha) : c_\alpha \neq 0\}$ and the \emph{component of lowest ${l}$-degree} is
 \begin{equation}
     \underline{f}_{{l}}(x) = \sum_{\deg_l(\alpha) = L_-(l)} c_\alpha x^\alpha.
 \end{equation}
\end{definition}

The component of lowest ${l}$-degree has the helpful property that $\underline{(fg)}_{{l}} = \underline{f}_{{l}} \underline{g}_{{l}}$, so (ir)reducibility results for the lowest-degree components can give helpful information about (ir)reducibility of a given polynomial.

One can also view the lowest degree component of a Laurent polynomial in a geometrical way. While our approach does not rely on the following perspective, it adds helpful context. The \emph{Newton polytope} of the Laurent polynomial $f$ is defined to be the convex hull of the set $\mathcal{A}(f)$ given by \eqref{eq:calADef}. Then, any vector $l \in \Z^m \setminus\{0\}$ determines a corresponding face of the polytope via
\[F = F_l = \left\{ y \in \mathcal{N}(f) : \langle y,l \rangle = \min_{y' \in \mathcal{N}(f)} \langle y',l \rangle\right\}. \] The \emph{facial polynomial} corresponding to the face $F$ is
\[f_F(x)= \sum_{\alpha \in F} c_\alpha x^\alpha.\]
 The reader can then check that  $ \underline{f}_{{l}}(x)$ is exactly the Newton polytope facial polynomial associated with the face $F_l$.
\bigskip

With the basic definitions, we are now ready to formulate our main results.
Let $\mathcal{P},\tcp \in \laurm$, and suppose that there exists $l\in \bbN^m$ such that
\begin{equation}\widetilde{\mathcal{P}}(x)={\mathcal{P}}(x^{\odot l}).
\end{equation}
Let $\widetilde{h}_0$ be the lowest degree component of $\tcp^+$, $\widetilde h_{\infty}$ be the lowest degree component of   $(\tcp(\hat x))^+$, where $\hat x=(x_1,x_2,\cdots, x_{m-1}, x_m^{-1})$. Notice that both $\widetilde h_0$ and $\widetilde h_\infty$   are  polynomials.  


  Under these assumptions, there exist $h_0, h_\infty \in \C[x_1,\ldots,x_m]$ such that 
  \[\widetilde{h}_0(x)=h_0(x^{\odot l}), \quad
  \widetilde{h}_{\infty}(x)=h_{\infty}(x^{\odot l})
  \]
  for example, by \cite[Lemma~3.1]{FLM1}.  
  We remark that $h_0$ is the component of lowest $l$-degree of $\mathcal{P}^+$ and
   $h_{\infty}$ is the component of lowest $l$-degree of $(\mathcal{P}(\hat x))^+$.

For the first result of this note, we will make the following assumptions:
\begin{enumerate}[label={\rm(\subscript{A}{{\arabic*}})}]
\item\label{assump1}
Each factor of ${\mathcal{P}}(x)$  meets either $\mathbf{0}_m$ or $(\mathbf{0}_{m-1},\infty)$. 
\medskip

\item\label{assump2} 
Both ${\mathcal{P}}^+(x)$ and  $({\mathcal{P}}(\hat x))^+$ are  proper polynomials, in the sense of Definition~\ref{def:properpol}.
\medskip

\item\label{assump2.5}  $h_0$ has $p_1$ irreducible factors and $h_{\infty}$ has $p_2$ irreducible factors.
\medskip

\item\label{assump3}
$\deg( \widetilde{h}_{0})+ \deg((\widetilde{h}_{\infty} (\hat x))^+)
>\deg(\widetilde{\mathcal{P}}^+)$ or 
$\deg(( \widetilde{h}_{0} (\hat x))^+)+ \deg((\widetilde{h}_{\infty} (x)))>\deg((\widetilde{\mathcal{P}}(\hat x))^+)$

\end{enumerate}
As we shall see in several examples, assumptions \ref{assump1}-\ref{assump3} can be verified in many cases of interest.

\begin{remark}\label{re1}
If  ${\mathcal{P}}^+(x)$ and $({\mathcal{P}}(\hat{x}))^+$  are proper polynomials, then $({\mathcal{P}}(\hat{x}))^+=({\mathcal{P}}^+(\hat{x}))^+.$ 
\end{remark}

\begin{theorem}\label{thm1}
Under assumptions \ref{assump1}-\ref{assump3}, the Laurent polynomial $\mathcal{P}(x)$ has at most $p_1+p_2-1$ irreducible components.
\end{theorem}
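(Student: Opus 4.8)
The plan is to reduce the problem to counting irreducible factors of the polynomials $\widetilde{h}_0$ and $\widetilde{h}_\infty$ (equivalently $h_0$ and $h_\infty$) by exploiting the relation between a Laurent polynomial and its lowest-degree components. First I would pass from $\mathcal{P}$ to $\mathcal{P}^+$ and $(\mathcal{P}(\hat x))^+$: by Remark~\ref{re1} and assumption \ref{assump2}, these are proper polynomials and are intertwined by the coordinate flip $x \mapsto \hat x$ (up to clearing denominators). Since irreducible components of the Laurent polynomial $\mathcal{P}$ correspond to non-monomial irreducible factors of $\mathcal{P}^+$, it suffices to bound the number of non-monomial irreducible factors of $\mathcal{P}^+$ by $p_1 + p_2 - 1$.

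Next I would write $\mathcal{P}^+ = m \cdot Q_1 \cdots Q_r$ where $m$ is a (positive) monomial and the $Q_i$ are the distinct non-monomial irreducible factors (with multiplicity absorbed, since multiplicity only helps the count). The key structural input is the multiplicativity of the lowest $l$-degree component: $\underline{(fg)}_l = \underline{f}_l\,\underline{g}_l$. Applying this with the weight vector $l$ from the hypothesis, $h_0 = \underline{(\mathcal{P}^+)}_l$ factors as $\underline{m}_l \cdot \underline{Q_1}_l \cdots \underline{Q_r}_l$, and since $\underline{m}_l$ is a monomial, each $Q_i$ contributes at least one non-monomial irreducible factor to $h_0$ — call it $A_i$ — unless $\underline{Q_i}_l$ is itself a monomial. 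The same argument applied to $(\mathcal{P}(\hat x))^+$ and its lowest $l$-degree component $h_\infty$ produces, for each $i$, a non-monomial factor $B_i$ of $h_\infty$ coming from the flipped $Q_i$, again unless the relevant lowest component degenerates to a monomial. So $h_0$ has at least $\#\{i : \underline{Q_i}_l \text{ non-monomial}\}$ non-monomial irreducible factors and similarly for $h_\infty$; thus $p_1 \geq \#S_0$ and $p_2 \geq \#S_\infty$ where $S_0, S_\infty \subseteq \{1,\dots,r\}$ are the index sets where the respective lowest components are non-monomial.

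To conclude $r \leq p_1 + p_2 - 1$ it therefore suffices to show $S_0 \cup S_\infty = \{1,\dots,r\}$ and $S_0 \cap S_\infty \neq \emptyset$ — i.e., every $Q_i$ has a non-monomial lowest component at $0$ or at $\infty$ (this is essentially assumption \ref{assump1}: since each factor of $\mathcal{P}$ meets $\mathbf{0}_m$ or $(\mathbf{0}_{m-1},\infty)$, the corresponding $Q_i$ cannot have both lowest components monomial — a factor whose Newton polytope face in direction $l$ is a single point, at both $0$ and $\infty$, would not meet either point), and at least one $Q_i$ lies in both $S_0$ and $S_\infty$. This last overlap is where assumption \ref{assump3} enters: I would translate the two degree inequalities into a statement about total $l$-degrees of the lowest components versus the full polynomials. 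Writing $D_0 = \deg_l(\mathcal{P}^+) - L_-(l)$ for the "gap" at $0$ and $D_\infty$ similarly at $\infty$, assumption \ref{assump3} forces $D_0 + D_\infty$ to be strictly smaller than the total available degree, which by a pigeonhole/additivity argument over the factors $Q_1,\dots,Q_r$ prevents the partition $S_0, S_\infty$ from being a disjoint cover — hence some $Q_i$ must be non-monomial at both ends, giving $\#(S_0 \cap S_\infty) \geq 1$ and the bound $r = \#(S_0 \cup S_\infty) = \#S_0 + \#S_\infty - \#(S_0\cap S_\infty) \leq p_1 + p_2 - 1$.

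The main obstacle I anticipate is the bookkeeping in the final step: making the degree inequality \ref{assump3} interact correctly with the factorization to guarantee a genuine overlap, since this requires tracking not just the number of factors but how the degree of $\mathcal{P}^+$ (and its flip) distributes among the $Q_i$ and their lowest components — and being careful that clearing denominators when forming $(\widetilde{h}_\infty(\hat x))^+$ does not destroy the degree comparison. A secondary technical point is verifying that assumption \ref{assump1} really does rule out a factor with monomial lowest component at both $0$ and $\infty$; this should follow because meeting $\mathbf{0}_m$ forces a non-monomial lowest-degree component (the variety passes through the origin with a nondegenerate tangent cone), and likewise meeting $(\mathbf{0}_{m-1},\infty)$ forces it after the flip, but one must handle the weight-$l$ version carefully using the pushforward relation $\widetilde{h}_0(x) = h_0(x^{\odot l})$ and \cite[Lemma~3.1]{FLM1}.
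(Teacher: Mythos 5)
Your overall architecture is the same as the paper's: factor $\mathcal{P}^+$ into irreducibles, use multiplicativity of lowest-degree components to charge each factor to at least one irreducible factor of $h_0$ or $h_\infty$ according to which of $\mathbf{0}_m$, $(\mathbf{0}_{m-1},\infty)$ it meets, and then use \ref{assump3} to exclude the configuration in which no factor meets both points. The paper organizes this as Lemma~\ref{lem:componentcount} (at most $p_1+p_2$ components, with equality forcing a clean split $\mathcal{P}^+=fg$) followed by a degree computation; your inclusion--exclusion packaging is equivalent, and the ``pigeonhole'' step you leave open is precisely the paper's computation \eqref{g1}--\eqref{g4}: if $S_0\cap S_\infty=\emptyset$, set $f=\prod_{i\in S_0}Q_i$ and $g=\prod_{i\in S_\infty}Q_i$; since $g$ does not meet $\mathbf{0}_m$ it has nonzero constant term, so the lowest-degree component of $f(x^{\odot l})$ is a constant multiple of $\widetilde h_0$, and symmetrically at $\infty$, giving $\deg\widetilde{\mathcal{P}}^+\ge\deg\widetilde h_0+\deg((\widetilde h_\infty(\hat x))^+)$, contradicting \ref{assump3}. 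That part of your plan is sound, and it works whenever the intersection is empty, not only at the extremal count.

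The genuine gap is the claim that ``meeting $\mathbf{0}_m$ forces a non-monomial lowest-degree component.'' This is false: $Q(x)=x_2^2+x_1+x_1^2$ is irreducible, has no monomial factors, and meets $(0,0)$ (one branch satisfies $x_1\approx -x_2^2\to 0$ with $x_1,x_2\in\C^\star$), yet its lowest-degree component is the monomial $x_1$. Consequently, with your definition of $S_0,S_\infty$ (lowest component non-monomial), the identity $S_0\cup S_\infty=\{1,\dots,r\}$ does not follow from \ref{assump1}, and the count $r=\#S_0+\#S_\infty-\#(S_0\cap S_\infty)$ breaks down. What meeting $\mathbf{0}_m$ \emph{does} force is a non-constant lowest component (otherwise $Q_i(\mathbf{0}_m)\neq 0$). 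The repair is to index $S_0$ and $S_\infty$ by which of the two points each $Q_i$ meets (equivalently, by non-constancy of the relevant lowest component) and to count \emph{every} irreducible polynomial factor of $h_0$ and $h_\infty$, including monomial ones such as $x_1$, toward $p_1$ and $p_2$; this is exactly how the paper's Lemma~\ref{lem:componentcount} obtains $a_1+a_3\le p_1$ and $a_2+a_3\le p_2$ from $t_{ij}\ge 1$. With that adjustment your argument closes and coincides with the paper's.
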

Theorem~\ref{thm1} will be proved in Section \ref{Sec:thmproofs}.
 As an immediate consequence we obtain the following corollary.

\begin{coro} If $p_1=p_2=1$, assumptions \ref{assump1}--\ref{assump3} imply that the Laurent polynomial $\mathcal{P}(x)$ is irreducible.
\end{coro}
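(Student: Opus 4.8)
The plan is to show that every irreducible factor of $\mathcal{P}$ contributes a nonzero amount to a suitable degree count at one of the two special points $\mathbf{0}_m$ or $(\mathbf{0}_{m-1},\infty)$, and to bound the number of factors by comparing these contributions to the degrees of $\widetilde h_0$ and $\widetilde h_\infty$. First I would factor $\mathcal{P}(x)=\prod_{i=1}^N q_i(x)$ into irreducible Laurent polynomials. By assumption \ref{assump1}, each $q_i$ meets $\mathbf{0}_m$ or $(\mathbf{0}_{m-1},\infty)$; sort the factors into two groups accordingly (a factor meeting both can be placed in either). Say $N_0$ factors meet $\mathbf{0}_m$ and $N_\infty$ meet $(\mathbf{0}_{m-1},\infty)$, with $N_0+N_\infty\ge N$ (with equality adjustable). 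The multiplicativity of the lowest-$l$-degree component, $\underline{(fg)}_l=\underline f_l\,\underline g_l$ (noted after Definition~\ref{lowestdfn}), means $h_0=\prod_i \underline{(q_i^+)}_l$ up to monomial bookkeeping, and similarly for $h_\infty$ after the substitution $\hat x$.

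The key step is: if $q_i$ is an irreducible Laurent polynomial meeting $\mathbf{0}_m$, then its lowest-$l$-degree component $\underline{(q_i^+)}_l$ is a nonconstant polynomial — i.e. contributes strictly positive degree — because $q_i$ vanishing at (the closure point) $\mathbf{0}_m$ forces the lowest-degree part of $q_i^+$ to be nonconstant; conversely a factor not meeting $\mathbf{0}_m$ contributes only a monomial to $h_0$. Combined with properness (assumption \ref{assump2}, via Remark~\ref{re1} which lets us pass freely between $(\mathcal{P}(\hat x))^+$ and $(\mathcal{P}^+(\hat x))^+$), this shows $h_0$ has at least $N_0$ non-monomial irreducible factors, so $N_0\le p_1$, and similarly $N_\infty\le p_2$. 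That would give $N\le p_1+p_2$, which is one too many; assumption \ref{assump3} is what removes the extra unit. Quantitatively, I would track degrees: \ref{assump3} says (in one of its two forms) $\deg\widetilde h_0+\deg(\widetilde h_\infty(\hat x))^+>\deg\widetilde{\mathcal{P}}^+$, i.e. the total degree appearing in the two asymptotics strictly exceeds what $\mathcal{P}^+$ can supply. If all $N$ factors split cleanly with $N_0+N_\infty=N$ and each contributed to only one asymptotic, the degrees would add up to at most $\deg\widetilde{\mathcal P}^+$ on the relevant side; the strict inequality forces at least one factor to meet \emph{both} $\mathbf{0}_m$ and $(\mathbf{0}_{m-1},\infty)$, hence to be double-counted, yielding $N\le p_1+p_2-1$. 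Setting $p_1=p_2=1$ gives $N\le 1$, i.e. $\mathcal{P}$ is irreducible, which is exactly the corollary.

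The main obstacle I anticipate is making the degree bookkeeping precise across the Laurent-to-polynomial passage: the operations $f\mapsto f^+$, $f\mapsto f(\hat x)$, and $f\mapsto \underline f_l$ do not commute on the nose (they differ by monomial factors and by which exponents are "lowest"), so one must be careful that "irreducible factor of $\mathcal{P}$" corresponds correctly to "non-monomial irreducible factor of $h_0$ or $h_\infty$" and that no degree is lost or double-subtracted. Remark~\ref{re1} and the relation $\widetilde{h}_j(x)=h_j(x^{\odot l})$ (with \cite[Lemma~3.1]{FLM1} guaranteeing the descent through the $x\mapsto x^{\odot l}$ substitution) are the tools that make this rigorous, but the case analysis — in particular handling a factor that genuinely meets both special points, and verifying that the strict inequality in \ref{assump3} is violated unless such a factor exists — is where the real work lies. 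A secondary subtlety is ensuring that "$q_i$ meets $\mathbf{0}_m$" translates to "$\underline{(q_i^+)}_l$ is nonconstant" rather than merely "$q_i^+(\mathbf{0}_m)=0$"; this uses that the lowest-$l$-degree component captures exactly the local behavior at the coordinate point after the change of variables sending $(\mathbf{0}_{m-1},\infty)$ to $\mathbf{0}_m$.
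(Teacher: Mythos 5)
Your proposal is correct and follows essentially the same route as the paper: the paper derives this corollary immediately from Theorem~\ref{thm1}, and your argument is precisely a re-derivation of that theorem's proof --- the sorting of irreducible factors by which of $\mathbf{0}_m$, $(\mathbf{0}_{m-1},\infty)$ they meet, the multiplicativity of lowest-degree components giving $N_0\le p_1$ and $N_\infty\le p_2$ (this is Lemma~\ref{lem:componentcount}), and the degree comparison via assumption \ref{assump3} that rules out a clean split and removes the extra factor. The bookkeeping subtleties you flag (the passages $f\mapsto f^+$, $f\mapsto f(\hat x)$, and the descent through $x\mapsto x^{\odot l}$) are exactly the ones the paper handles via Remark~\ref{re1}, equations \eqref{g1}--\eqref{g4}, and \cite[Lemma~3.1]{FLM1}.
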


It the sequel we consider the situation when we have an equality between the degrees, that is, when assumption \ref{assump3} is replaced by

\begin{enumerate}[label=(\subscript{A'}{{\arabic*}})]\addtocounter{enumi}{3}

		\item
  \label{assumpA'3}
	$\deg \widetilde{h}_{0}+ \deg((\widetilde{h}_{\infty} (\hat x))^+)=\deg{\widetilde{\mathcal{P}}}^+$  and
	$\deg(( \widetilde{h}_{0} (\hat x))^+)+ \deg((\widetilde{h}_{\infty} (x)))=\deg(({\widetilde{\mathcal{P}}(\hat x)})^+)$
	\end{enumerate}

	\begin{theorem}\label{thm2}
	Under assumptions \ref{assump1}, \ref{assump2}, \ref{assump2.5}, and  \ref{assumpA'3},  either the Laurent polynomial $\mathcal{P}(x)$ has at most $p_1+p_2-1$ irreducible components or $\mathcal{P}(x)$ has  $p_1+p_2$ irreducible components  and $\widetilde{\mathcal{P}}^{+}(x)=C{\widetilde{h}}_0(x)({\widetilde{h}}_{\infty} (\hat x))^+$ where $C$ is a constant. 
\end{theorem}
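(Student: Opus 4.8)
The plan is to follow the proof of Theorem~\ref{thm1} and to analyze carefully the borderline case in which \ref{assump3} fails to be strict and becomes the equalities \ref{assumpA'3}.

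First I would recall what the proof of Theorem~\ref{thm1} actually establishes. Factoring $\mathcal{P}=c\,g_1\cdots g_N$ into irreducible non-units of $\laurm$, assumption \ref{assump2} together with Remark~\ref{re1} gives polynomial normalizations $\mathcal{P}^+=c\prod_i g_i^+$ and $(\mathcal{P}(\hat x))^+=c'\prod_i(g_i(\hat x))^+$ with each $g_i^+$ and $(g_i(\hat x))^+$ a proper irreducible polynomial. Assumption \ref{assump1} sorts the factors into the ``type $0$'' ones (meeting $\mathbf{0}_m$, say $a$ of them) and the ``type $\infty$'' ones (meeting $(\mathbf{0}_{m-1},\infty)$, say $b=N-a$ of them); factors meeting both may be assigned to either group. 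Multiplicativity of the component of lowest $l$-degree — which, via \cite[Lemma~3.1]{FLM1}, corresponds to the lowest total degree component after the substitution $x\mapsto x^{\odot l}$ — shows $h_0=c\prod_i\underline{(g_i^+)}_l$ and $h_\infty=c'\prod_i\underline{((g_i(\hat x))^+)}_l$. Since $g_i$ meets $\mathbf{0}_m$ exactly when the lowest $l$-degree component of $g_i^+$ is non-monomial, each type-$0$ factor contributes a non-monomial irreducible factor to $h_0$, so $p_1\ge a$, and symmetrically $p_2\ge b$; hence $N=a+b\le p_1+p_2$. The place where \ref{assump3} enters is the following: in the extremal case $N=p_1+p_2$ one must have $a=p_1$ and $b=p_2$, each of the $p_1$ type-$0$ factors must have an \emph{irreducible} lowest $l$-degree component of $g_i^+$ but a \emph{monomial} lowest $l$-degree component of $(g_i(\hat x))^+$, with these roles exchanged for the $p_2$ type-$\infty$ factors; and a degree computation with the substitution $x_m\mapsto x_m^{-1}$ then forces both $\deg\widetilde h_0+\deg((\widetilde h_\infty(\hat x))^+)\le\deg\widetilde{\mathcal{P}}^+$ and $\deg((\widetilde h_0(\hat x))^+)+\deg\widetilde h_\infty\le\deg((\widetilde{\mathcal{P}}(\hat x))^+)$. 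Under \ref{assump3} one of these is strict, incompatible with $N=p_1+p_2$, so $N\le p_1+p_2-1$; this is Theorem~\ref{thm1}.

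Now suppose instead that \ref{assumpA'3} holds, so both of the above degree relations are equalities, and suppose $\mathcal{P}$ has more than $p_1+p_2-1$ irreducible components. By the bound $N\le p_1+p_2$ above, $N=p_1+p_2$, and the rigid picture just described is in force. I would then feed the equality \ref{assumpA'3} back into this picture. Degrees add over $\widetilde{\mathcal{P}}^+=c\prod_i\widetilde f_i$ (where $\widetilde f_i(x):=g_i^+(x^{\odot l})$) and over $\widetilde h_0=c\prod_i\underline{\widetilde f_i}$, while $(\widetilde h_\infty(\hat x))^+$ equals, up to a monomial, $\prod_i(\underline{(\widetilde f_i(\hat x))^+}(\hat x))^+$. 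Consequently the equality $\deg\widetilde h_0+\deg((\widetilde h_\infty(\hat x))^+)=\deg\widetilde{\mathcal{P}}^+$ becomes a sum over $i$ of per-factor quantities, each of which (by the degree bookkeeping already used to prove the corresponding inequality) is $\le 0$; hence each vanishes. A vanishing contribution means that a type-$0$ factor $\widetilde f_i$ is homogeneous, i.e. $\widetilde f_i=\underline{\widetilde f_i}$ up to a constant, and that a type-$\infty$ factor satisfies $\widetilde f_i=(\underline{(\widetilde f_i(\hat x))^+}(\hat x))^+$ up to a constant. Multiplying these identities over all $i$, and using that $\widetilde{\mathcal{P}}^+$ has no monomial factor (assumption \ref{assump2}) to absorb the leftover monomial, yields exactly $\widetilde{\mathcal{P}}^+=C\,\widetilde h_0\,(\widetilde h_\infty(\hat x))^+$, the second alternative of the theorem.

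I expect the main obstacle to be the per-factor degree bookkeeping: showing that for a single proper irreducible polynomial $f$ one has $\deg\underline f+\deg((\underline{f(\hat x)^+})(\hat x))^+\le\deg f$, and — for Theorem~\ref{thm2} — pinning down exactly when this is an equality (namely $f$ homogeneous, or else the Newton-polytope face of $f$ in the $(1,\dots,1,-1)$ direction being a single vertex), while carefully tracking the monomial prefactors produced by repeated applications of $(\,\cdot\,)^+$ so that the factorization of degrees is legitimate — in particular, that $(\widetilde h_\infty(\hat x))^+$ is proper. The remaining ingredients are the multiplicativity of lowest-degree components and the $\mathcal{P}\leftrightarrow\widetilde{\mathcal{P}}$ dictionary already recorded in Section~\ref{Sec:setting}.
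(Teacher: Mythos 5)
Your high-level strategy coincides with the paper's: reduce to the extremal case $N=p_1+p_2$, show the degree relation must be an equality, and read the factorization off the equality case. The paper, however, never does per-factor bookkeeping: it invokes the second assertion of Lemma~\ref{lem:componentcount} to group all factors meeting $\mathbf{0}_m$ into a single polynomial $f$ and all factors meeting $(\mathbf{0}_{m-1},\infty)$ into a single $g$, writes the expansions \eqref{g1}--\eqref{g2}, and runs the one degree chain \eqref{g4}; equality throughout forces the remainders $f_1$ and $g_1$ to vanish, which is precisely the asserted factorization. Your reorganization into a sum of per-factor quantities is where the trouble lies.

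The step you defer to the last paragraph --- that $\deg\underline f+\deg\bigl(\bigl(\underline{(f(\hat x))^+}(\hat x)\bigr)^+\bigr)\le\deg f$ for every proper irreducible polynomial $f$ --- is false as stated. Take $m=3$ and $f=x_1^4+x_2^4+x_1^2x_3^2$, which is proper and irreducible; then $\underline f=f$ has degree $4$, while $(f(\hat x))^+=x_1^4x_3^2+x_2^4x_3^2+x_1^2$ has lowest component $x_1^2$, whose image under your construction is again $x_1^2$ of degree $2$, so the left-hand side is $6>4$. Hence your claim that each per-factor quantity is $\le 0$ --- the hinge of the ``sum of nonpositive terms equals zero, so each vanishes'' step --- does not hold for general proper irreducible factors, and the claimed equality characterization (homogeneity) fails with it. What rescues the argument is an observation you do not make but the paper does in the proof of Lemma~\ref{lem:componentcount} (``$f_{2,j}$ does not meet $\mathbf{0}_m$, thus $f_{2,j}(\mathbf{0}_m)\neq 0$''): for an irreducible non-monomial polynomial, \emph{not meeting} $\mathbf{0}_m$ is equivalent to having a nonzero constant term, i.e.\ the relevant lowest component is a nonzero \emph{constant}, not merely a monomial as you assert. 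With that, each type-$0$ factor contributes degree $0$ to the $h_\infty$ side and its per-factor inequality degenerates to the trivial $\deg\underline{\widetilde f_i}\le\deg\widetilde f_i$ (with equality exactly for homogeneous factors), and symmetrically for type-$\infty$ factors; my $f$ above escapes only because it in fact meets both $\mathbf{0}_3$ and $(\mathbf{0}_2,\infty)$ and is therefore excluded from the extremal configuration by Lemma~\ref{lem:componentcount}. Under your weaker ``monomial lowest component'' hypothesis the bookkeeping genuinely fails, so as written the central step is both unproved and incorrectly formulated; once the constant-term observation is inserted, your per-factor route does close and is a legitimate alternative to the paper's two-group argument.
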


As discussed before, Theorems~\ref{thm1} and \ref{thm2} improve the results of \cite{FLM1} in several ways. First, the approach of \cite{FLM1} is necessarily limited to single-vertex models (that is operators on graphs with a free $\bbZ^d$-action that acts transitively on vertices), since that manuscript worked directly with polynomials enjoying a certain expansion in terms of a single fixed Laurent polynomial and some characters. For more general lattices and especially for lattices with many points in the fundamental domain, that expansion is not valid, so it is necessary to deal directly with $\mathcal{P}$ rather than $p$. The abstract result here formulates conditions purely on asymptotics of the polynomial itself which can be checked directly.

\section{Proofs of Theorems \ref{thm1} and \ref{thm2}} \label{Sec:thmproofs}
Before proving our main theorems we will need to collect a few other results.

\subsection{Technical Lemma}
\begin{lemma}\label{lem:componentcount}
	Let  ${\mathcal{P}}$ be a Laurent polynomial that satisfies assumptions  \ref{assump1}, \ref{assump2}, and \ref{assump2.5}. Then  ${\mathcal{P}}$ has at most $p_1+p_2$ irreducible components. Moreover, if ${\mathcal{P}}(x)$ has exactly $p_1+p_2$ irreducible components then $\mathcal{P}^+
	$ enjoys a factorization of the form
	\begin{equation}
	\mathcal{P}^{+}=fg,
	\end{equation}
	where $f \in \polym$ meets $\mathbf{0}_{m}$ and does not meet $(\mathbf{0}_{m-1},\infty)$ and $g \in \polym$ meets $(\mathbf{0}_{m-1},\infty)$ and does not meet $\mathbf{0}_m$. 
\end{lemma}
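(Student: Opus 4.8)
The plan is to track how the hypothesis that each irreducible factor of $\mathcal{P}$ meets $\mathbf{0}_m$ or $(\mathbf{0}_{m-1},\infty)$ interacts with the factorizations of the two asymptotics $h_0$ and $h_\infty$. First I would write $\mathcal{P}^+ = c\, x^\beta \prod_{i=1}^N q_i$ where the $q_i$ are the (distinct up to units) non-monomial irreducible factors of $\mathcal{P}^+$; since $\mathcal{P}^+$ is proper by \ref{assump2}, there is no positive monomial factor, so really $\mathcal{P}^+ = c\prod_i q_i$ and $N$ is the number of irreducible components of the Laurent polynomial $\mathcal{P}$. By \ref{assump1} (together with Corollary~\ref{coro:VfVf+} relating $V(f)$ and $V(f^+)$, and the fact that meeting $\mathbf{0}_m$ is detected by the lowest-degree component being non-monomial, as in Lemma~\ref{lem:MeetingZero}), each $q_i$ meets $\mathbf{0}_m$ or meets $(\mathbf{0}_{m-1},\infty)$. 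Partition the index set accordingly: let $f$ be the product of those $q_i$ that meet $\mathbf{0}_m$ but not $(\mathbf{0}_{m-1},\infty)$, let $g$ be the product of those that meet $(\mathbf{0}_{m-1},\infty)$ but not $\mathbf{0}_m$, and let $r$ be the product of those that meet both. Then $\mathcal{P}^+ = c\,f\,g\,r$, and I must bound the total number of factors.

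The key step is to control each group via the asymptotics. Since $\underline{(fg)}_l = \underline{f}_l\,\underline{g}_l$ (multiplicativity of the lowest-$l$-degree component, stated just after Definition~\ref{lowestdfn}), and since $h_0$ is the component of lowest $l$-degree of $\mathcal{P}^+$, the factorization $\mathcal{P}^+ = c\,f\,g\,r$ passes to $h_0 = c'\,\underline{f}_l\,\underline{g}_l\,\underline{r}_l$. Now the factors $q_i$ making up $g$ do \emph{not} meet $\mathbf{0}_m$, hence their lowest-$l$-degree components $\underline{(q_i)}_l$ are monomials (contrapositive of the converse direction of Lemma~\ref{lem:MeetingZero}), so $\underline{g}_l$ contributes nothing to the factor count of $h_0$. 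On the other hand each $q_i$ in $f$ does meet $\mathbf{0}_m$, so $\underline{(q_i)}_l$ is non-monomial and contributes at least one irreducible factor to $h_0$; thus the number of factors in $f$ is at most $p_1$. Similarly, applying the same reasoning to $\mathcal{P}(\hat x)$ — using that $(\mathcal{P}(\hat x))^+ = (\mathcal{P}^+(\hat x))^+$ by Remark~\ref{re1}, and that meeting $(\mathbf{0}_{m-1},\infty)$ for $q_i$ is the same as $q_i(\hat x)$ meeting $\mathbf{0}_m$ — the number of factors in $g$ is at most $p_2$, where here $h_\infty$ is the relevant asymptotic. The subtle point is the group $r$ of factors meeting \emph{both} $\mathbf{0}_m$ and $(\mathbf{0}_{m-1},\infty)$: such a $q_i$ contributes a non-monomial factor to \emph{both} $h_0$ and $h_\infty$, so it is already counted in both the bound-by-$p_1$ for $f$ and the bound-by-$p_2$ for $g$ if we merge it into one of the groups. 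The cleanest bookkeeping is to fold $r$ into, say, $f$: then ``$f$'' (redefined) is the product of all $q_i$ meeting $\mathbf{0}_m$, with at most $p_1$ factors, and ``$g$'' is the product of all $q_i$ meeting $(\mathbf{0}_{m-1},\infty)$ but not $\mathbf{0}_m$, with at most $p_2$ factors, giving $N \le p_1 + p_2$ and immediately the stated shape $\mathcal{P}^+ = fg$ with $f$ meeting $\mathbf{0}_m$, $g$ meeting $(\mathbf{0}_{m-1},\infty)$ but not $\mathbf{0}_m$ — but this does not yet give $f$ \emph{not} meeting $(\mathbf{0}_{m-1},\infty)$.

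To get the refined conclusion in the equality case, suppose $N = p_1 + p_2$ exactly. Then both bounds above must be tight: $f$ has exactly $p_1$ factors and $g$ exactly $p_2$, and moreover every irreducible factor of $h_0$ must arise from some $\underline{(q_i)}_l$ with $q_i \mid f$ (no spare factors coming from $g$ or from a constant), and every irreducible factor of $h_\infty$ must arise from some $q_i \mid g$ after the $\hat x$ substitution. If some $q_i$ dividing $f$ also met $(\mathbf{0}_{m-1},\infty)$, then $q_i$ would contribute a non-monomial factor to $h_\infty$ as well; since $g$ already accounts for all $p_2$ factors of $h_\infty$, this extra contribution forces $h_\infty$ to have strictly more than $p_2$ irreducible factors — unless that contribution coincides (up to units) with one coming from $g$, which is impossible because the $q_i$ are distinct irreducibles and distinct irreducibles have non-associate lowest-degree components when those are non-monomial... this last associativity claim is exactly where care is needed, and I would prove it by noting that from $\mathcal{P}^+ = c \prod q_j$ and multiplicativity we get $h_\infty = c' \prod_j \underline{(q_j(\hat x))^+}$ (up to the $+$-normalization), so the non-monomial lowest components of distinct $q_j$'s are themselves the distinct (non-monomial) irreducible-up-to-units factors being counted — there is no room for a coincidence without exceeding the count $p_2$. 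Hence in the equality case no factor of $f$ meets $(\mathbf{0}_{m-1},\infty)$, giving the clean splitting $\mathcal{P}^+ = fg$ as claimed. The main obstacle is precisely this combinatorial-bookkeeping argument around the "both" group $r$ and the non-coincidence of distinct irreducibles' asymptotic factors; everything else is a direct application of Lemma~\ref{lem:MeetingZero}, Remark~\ref{re1}, and multiplicativity of the lowest-$l$-degree component.
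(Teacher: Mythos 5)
Your proof is correct and follows essentially the same route as the paper's: partition the irreducible factors of $\mathcal{P}^+$ into those meeting only $\mathbf{0}_m$, only $(\mathbf{0}_{m-1},\infty)$, or both, then use multiplicativity of the lowest-$l$-degree component to bound the first-plus-third count by $p_1$ and the second-plus-third count by $p_2$, so that equality in $N\le p_1+p_2$ forces the third group to be empty. Your worry about distinct irreducibles having associate asymptotic factors is a non-issue (and the paper does not address it) because, as you ultimately observe, $p_1$ and $p_2$ count irreducible factors with multiplicity through the product formula, so coincidences cannot lower the count.
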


\begin{proof} Factoring $\mathcal{P}^+$ into irreducibles and applying Assumption~\ref{assump1}, we can write 
	\begin{equation}
	{\mathcal{P}}^{+}=
	\prod_{i=1}^3 \prod_{j=1}^{a_i}f_{ij},
	\end{equation}
	where each $f_{1j}$ is a polynomial that meets $\mathbf{0}_{m}$ and does not meet $(\mathbf{0}_{m-1},\infty)$, each $f_{2j}$ is a polynomial that meets $(\mathbf{0}_{m-1},\infty)$ and does not meet $\mathbf{0}_{m}$, and each $f_{3j}$ is a polynomial that  meets both $\mathbf{0}_{m}$ and  $(\mathbf{0}_{m-1},\infty)$. Note by assumption \ref{assump2} that no $f_{ij}$ can be a monomial so $a_1+a_2+a_3$ is the number of irreducible components of both $\mathcal{P}$ and of $\mathcal{P}^+$.
	
	For each $i\in\{1,3\},j$, let $\widetilde{g}_{ij}$ denote the component of $\widetilde{f}_{ij}=f_{ij}(x^{\odot l})$ of lowest degree, and 
	$g_{ij}$ be such that $\widetilde{g}_{ij}=g_{ij}(x^{\odot l})$.
	Let $t_{ij}$, $i\in\{1,3\}$ denote the number of irreducible factors of $g_{ij}$.
	
	Since $f_{1j}$ and $f_{3j}$ meet $\mathbf{0}_m$, we have that $t_{ij}\geq 1$ for all $1 \le j \le a_i$, $i=1,3$. In particular,
	\begin{equation} \label{eq:compcount:trsums}
	\sum^{a_i}_{j=1}t_{ij}\geq a_i, \quad i=1,3.
	\end{equation}
 We also observe that by definition $f_{2,j}$ does not meet $\mathbf{0}_{m}$, thus $f_{2,j}(\mathbf{0}_{m}) \neq 0$.\par
	Since $\widetilde h_0(x) =C \prod_{i=1,3} \prod_{j=1}^{a_i} \widetilde g_{ij}$ and
	\begin{equation}
	h_0(x) =C \prod_{i=1,3} \prod_{j=1}^{a_i} g_{ij},
	\end{equation}
	these observations and \eqref{eq:compcount:trsums} give
	\begin{equation}\label{eq:p1comp}
	a_1+a_3 \leq \sum_{i=1,3}\sum^{a_i}_{j=1} t_{ij}	=p_1.
	\end{equation}
Similarly, we find that
	\begin{equation}\label{eq:p2comp}
	a_2+a_3 \leq  p_2.
	\end{equation}
	
	Combining \eqref{eq:p1comp} and \eqref{eq:p2comp},  we conclude that
	\begin{equation}\label{eq:totalcomp}
	a_1+a_2+a_3 \leq a_1+a_2+2a_3 \leq p_1+p_2.
	\end{equation}
	Thus, the total number of components, $a_1+a_2+a_3$, is bounded above by $p_1+p_2$, and \eqref{eq:totalcomp} makes it clear that $a_1+a_2+a_3=p_1+p_2$ forces $a_3=0$, as desired.
\end{proof}

\subsection{Proofs of Theorems}

\begin{proof}[Proof of Theorem \ref{thm1}]
For simplicity, we assume the first half of Assumption~\ref{assump3} holds, that is,
\begin{equation} \label{eq:thm1:assmp4a}
    \deg( \widetilde{h}_{0})+ \deg((\widetilde{h}_{\infty} (\hat x))^+)
>\deg(\widetilde{\mathcal{P}}^+).
\end{equation}
From Lemma \ref{lem:componentcount}, we conclude that  ${\mathcal{P}}^+(x)$ has at most $p_1+p_2$ irreducible factors.  Assume for the sake of contradiction that ${\mathcal{P}}^+(x)$ has exactly $p_1+p_2$ irreducible factors. 
By Lemma \ref{lem:componentcount} we can write 
${\mathcal{P}} ^+=f g $, where where $f$ is a polynomial that meets $\mathbf{0}_m$ and does not meet $(\mathbf{0}_{m-1},\infty)$ and $g$ is a polynomial that meets $(\mathbf{0}_{m-1},\infty)$ and does not meet $\mathbf{0}_m$. 
By definition of $\widetilde h_0$ and $\widetilde h_\infty$ and the assumptions on $f$ and $g$, we have that
\begin{equation}\label{g1}
f(x^{\odot l}) = C_1(\widetilde{h}_0(x)+f_1(x))
\end{equation}
where each term in $f_1(x)$ has degree higher than the degree of $\widetilde{h}_0(x)$ and
\begin{equation}\label{g2}
g(x^{\odot l}) = C_2(\widetilde{h}_{\infty}(\hat {x})+g_1(\hat{x}))x_m^{\alpha'}
\end{equation}
 where 
$g_1(\hat{x})$ consists of higher order terms of $\hat x$ and $\alpha'$ is a suitable constant, namely,
$$
\alpha'=\max\{\alpha_{0,\widetilde{h}_{\infty}({x})}(m),\alpha_{0,g_1}(m)\},
$$ 
with $\alpha_{0}$ defined as in \eqref{eq:alpha0jdef}. In simple terms, $\alpha'$ is defined in a way that  $x_m^{-\alpha'}$ is the lowest power of $x_m$ appearing in the sum $\widetilde{h}_{\infty}(\hat{x})+ g_1(\hat{x})$.  Then, letting $\widetilde f(x)=f(x^{\odot l})$
\begin{equation}\label{g4}
\deg{\widetilde{\mathcal{P}}^{+}}\geq \alpha'+\deg(\widetilde{h}_{\infty}(\hat x))+\deg(\widetilde f)\geq\deg((\widetilde{h}_{\infty}(\hat x))^+)+\deg(\widetilde{h}_0),
\end{equation}
which contradicts our assumption \eqref{eq:thm1:assmp4a}.  Therefore,   $\mathcal{P}$ has at most $p_1+p_2-1$ components, as desired.

The case when $\deg(( \widetilde{h}_{0} (\hat x))^+)+ \deg((\widetilde{h}_{\infty} (x)))>\deg((\widetilde{\mathcal{P}}(\hat x))^+)$ is proved similarly.
\end{proof}

\begin{proof}[Proof of Theorem \ref{thm2}]
	 Assume that ${\mathcal{P}}(x)$  has  $p_1+p_2$ irreducible components. We can follow the proof of Theorem \ref{thm1} to reach the chain of inequalities \eqref{g4}.
	 By the first half of assumption \ref{assumpA'3}, namely $\deg \widetilde{h}_{0}+ \deg((\widetilde{h}_{\infty} (\hat x))^+)=\deg{\widetilde{\mathcal{P}}}^+$, all of these must be equations hence by \eqref{g1} and \eqref{g2}, we have that $\alpha'=\alpha_{0,\widetilde{h}_{\infty}}$ and $\deg(\widetilde f)= \deg(\widetilde{h}_0) $. From the latter, we deduce $f_1\equiv 0$. Similarly, using the second half of assumption \ref{assumpA'3} we find that $g_1\equiv0$, finishing the proof.
\end{proof}

\section{Floquet Transform in the Matrix-Valued Case}\label{Sec:Floquet}

Our main interest in the results contained in Theorems~\ref{thm1} and \ref{thm2} comes from applications to Schr\"odinger operators, in particular to estimating the number of components of the dispersion relation and its restriction to constant energies. The former estimates the number of components of the \emph{Bloch variety} and the latter estimates the number of components associated to the \emph{Fermi variety}. Before stating corollaries of our general results, we include some basic terminology and results from Floquet theory in the discrete case following \cite{FLM1}. For a more complete picture we recommend Kuchment's survey \cite{Kuchment2016BAMS} and references therein. 

Since we are interested in periodic operators on graphs, it will be convenient for us to discuss the matrix-valued version of the Floquet transform. This is well-known. We include a summary to fix notation for the reader's convenience. To that end, fix $d\in \bbN$ and $\nu \in \bbN$, which here denotes the order of matrices and later will denote the number of vertex orbits in a given periodic graph. For a set $S$, we denote by $\bbC^{\nu \times S}$ the set of functions from $\{1,2,\ldots,\nu\} \times S$ to $\bbC$.

We then consider the basic Hilbert space 
\begin{align*}
\ell^2(&\bbZ^d,\bbC^\nu) 
\cong \ell^2(\nu \times \bbZ^d)  \\
& = \left\{\varphi\in \bbC^{\nu \times \bbZ^d} : \sum_{j=1}^\nu \sum_{l \in \bbZ^d} |\varphi(j,l)|^2  < \infty \right\}.\end{align*}

Let us now define the operators of interest on this space. Given 
$$\{a_n^{ij} : n \in \bbZ^d, \ 1 \le i,j \le \nu\}$$
such that $a^{ij}$ has compact support for each $1 \le i,j \le \nu$, the associated matrix-valued Toeplitz operator $A:\ell^2(\bbZ^d, \bbC^\nu)\rightarrow \ell^2(\bbZ^d, \bbC^\nu)$ is given by
\begin{equation} \label{eq:matrixToeplitzOpDef}
    [A\psi](i,l) = \sum_{j=1}^\nu\sum_{l' \in \bbZ^d} a^{ij}_{l-l'}\psi(j,l'), \quad 1\le i \le \nu, \ l \in \bbZ^d. 
\end{equation}
It is helpful to rewrite this in matrix form, especially since we will be describing some other block decompositions later. More specifically, set $\psi(l) = [\psi(1, l),\ldots,\psi(\nu, l)]^\top \in \bbC^\nu$ for $\psi \in \ell^2(\bbZ^d,\bbC^\nu)$  and write 
\[
A(l) = \begin{bmatrix}
    a^{11}_l & \cdots & a^{1\nu}_l \\
    \vdots & \ddots & \vdots \\
    a^{\nu 1}_l & \cdots & a^{\nu \nu}_l
\end{bmatrix} \]
and observe that we can rewrite \eqref{eq:matrixToeplitzOpDef} as 
\begin{equation} 
    [A\psi](l) = \sum_{l' \in \bbZ^d} A(l-l')\psi(l'), \quad  l \in \bbZ^d. 
\end{equation}

To add a potential, we consider $V:\nu \times \bbZ^d \to \bbC $ bounded and define $[V\psi](i,l) = V(i,l)\psi(i,l)$ for $1 \le i \le \nu$ and $l \in \bbZ^d$. We are then interested in the  operator
\begin{equation}
    H = A+V.
\end{equation}

For $q \in \bbN^d$, we say that $V$ is $q$\emph{-periodic} if\footnote{recall that $x\odot y = (x_1y_1,x_2y_2,\ldots,x_dy_d)$} $V(j,n+l\odot q) = V(j,n)$ for all $1\le j \le \nu$ and $n,l \in \bbZ^d$.
In this case, denote by
$$W=\{ w \in \bbZ^d : 0 \le w_i < q_i \ \forall 1 \le i \le d\}$$ 
its fundamental domain and write
$\Gamma=\{l\odot q: l\in \bbZ^d\}$ for the period lattice. Writing $q^* = (q_1^{-1},\ldots,q_d^{-1})$, the dual lattice is given by $\Gamma^{\ast}=\{l \odot q^* : l \in \bbZ^d\}= \{(\tfrac{l_1}{q_1},\ldots,\frac{l_d}{q_d}):\,\,l\in \bbZ^d\}.$
Let us now define $\bbT^d_* = \bbR^d/\Gamma^*$,
\begin{align*}
    \scrH_{q,\nu} 
    &= \left\{ f: \nu  \times W \times \bbT^d_* \to \bbC : \sum_{j=1}^\nu \sum_{w \in W} \int |f(j,w,k)|^2 \frac{dk}{|\bbT^d_*|} < \infty \right\} \\
    & \cong \bigoplus_{j=1}^\nu \int_{\bbT^d_*}^\oplus \bbC^W \, \frac{dk}{|\bbT^d_*|} \\
    & \cong L^2\left(\bbT^d_*, \bbC^{\nu \times W}; \frac{dk}{|\bbT^d_*|} \right).
\end{align*} 
The first version of the Floquet transform is given by $\scrF_{q,\nu} : \ell^2(\bbZ^d,\bbC^\nu) \to \scrH_{q,\nu}$, $u \mapsto \widehat{u}$ where
\[
\widehat{u}(j,w,k) 
= \sum_{n \in \bbZ^d} e^{-2 \pi \iop \langle n \odot q,k\rangle}u(j,w+n\odot q), 
\]
for $1 \le j \le \nu$, $w \in W$, and $k \in \bbT^d_*$. This conjugates $H$ to a decomposable operator whose action on the $k$th fiber of $\scrH_{q,\nu}$ is given by the restriction of $H$ to $\{1,\ldots,\nu\} \times W$ with boundary conditions 
\begin{equation}\label{bc}
u(j, n+m\odot q)  = e^{2\pi \iop \langle m\odot q,k\rangle } u(j,n), \  n,m\in\Z^d, \ 1\le j \le \nu.
\end{equation}
More precisely, for $k \in \bbT^d_*$, define $\widetilde{H}(k)$ on $\bbC^{\nu \times W}$ by 
\begin{align}\nonumber
    [\widetilde{H}&(k)u](i,w) \\
    \label{eq:tildeHkdef}
    & =V(i,w)u(i,w) 
    + \sum_{j=1}^\nu \sum_{w' \in W} \sum_{l \in \bbZ^d} e^{2\pi \iop \langle l \odot q, k \rangle} a^{ij}_{w-(w'+l\odot q)}u(j,w') 
\end{align}
for $1\le i \le \nu$, $w \in W$. We have the following:

\begin{prop}\label{prop:floquetform1}
The operator $\scrF_{q,\nu}$ is unitary. If $V$ is $q$-periodic, then 
\[
\scrF_{q,\nu} H \scrF_{q,\nu}^* = \int^\oplus_{\bbT^d_*} \widetilde{H}(k) \, \frac{dk}{|\bbT^d_*|},
\]
where $\widetilde{H}(k)$ is as in \eqref{eq:tildeHkdef}.
\end{prop}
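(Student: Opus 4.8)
The plan is to establish the two assertions separately: first that $\scrF_{q,\nu}$ is unitary, and then that it conjugates $H = A+V$ to the direct integral $\int^\oplus_{\bbT^d_*}\widetilde H(k)\,\frac{dk}{|\bbT^d_*|}$. In both steps I would work on the dense subspace of finitely supported elements of $\ell^2(\bbZ^d,\bbC^\nu)$, where all the sums that appear are finite and all interchanges are unproblematic, and then extend by continuity (for the unitarity claim) and by boundedness of $A+V$ together with unitarity of $\scrF_{q,\nu}$ (for the intertwining claim).

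For unitarity the two structural facts I would use are: (i) every $l\in\bbZ^d$ has a unique decomposition $l = w + n\odot q$ with $w\in W$, $n\in\bbZ^d$, so $\{W+n\odot q: n\in\bbZ^d\}$ tiles $\bbZ^d$; and (ii) the characters $\{k\mapsto e^{2\pi\iop\langle m\odot q,k\rangle}: m\in\bbZ^d\}$ form an orthonormal basis of $L^2(\bbT^d_*,\frac{dk}{|\bbT^d_*|})$, which uses $\langle m\odot q,\,l\odot q^*\rangle = \langle m,l\rangle\in\bbZ$ to see that these characters are well defined on $\bbT^d_* = \bbR^d/\Gamma^*$. Expanding $\langle\widehat u,\widehat v\rangle_{\scrH_{q,\nu}}$ for finitely supported $u,v$, the $k$-integral of $e^{-2\pi\iop\langle(n-n')\odot q,k\rangle}$ collapses to $\delta_{n,n'}$ by (ii), and (i) then repackages the remaining sum over $W\times\bbZ^d$ as $\sum_j\sum_{l\in\bbZ^d}u(j,l)\overline{v(j,l)} = \langle u,v\rangle$, so $\scrF_{q,\nu}$ is an isometry. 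Surjectivity follows from the computation $\widehat{\delta_{(j_0,\,w_0+m\odot q)}}(j,w,k) = \delta_{j,j_0}\delta_{w,w_0}\,e^{-2\pi\iop\langle m\odot q,k\rangle}$: as $(j_0,w_0,m)$ range over $\{1,\dots,\nu\}\times W\times\bbZ^d$ these functions span a dense subspace of $\scrH_{q,\nu}$ (tensors of the standard basis of $\bbC^{\nu\times W}$ with the Fourier basis of $L^2(\bbT^d_*)$), so $\scrF_{q,\nu}$ has dense range, hence is unitary.

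For the intertwining identity I would fix finitely supported $u$ and compute $\widehat{Hu}(i,w,k)$ fiberwise. The potential part is immediate: $\widehat{Vu}(i,w,k) = \sum_n e^{-2\pi\iop\langle n\odot q,k\rangle}V(i,w+n\odot q)u(i,w+n\odot q)$, and $q$-periodicity pulls $V(i,w)$ out of the sum to leave $V(i,w)\widehat u(i,w,k)$, the first term of \eqref{eq:tildeHkdef}. For the Toeplitz part, write $\widehat{Au}(i,w,k) = \sum_n e^{-2\pi\iop\langle n\odot q,k\rangle}\sum_j\sum_{l'\in\bbZ^d}a^{ij}_{(w+n\odot q)-l'}u(j,l')$, substitute $l' = w'+p\odot q$ with $w'\in W$, $p\in\bbZ^d$ (legal by (i)), and relabel the hopping index by $l = p-n$, so that $(w+n\odot q)-l' = w-(w'+l\odot q)$ and $e^{-2\pi\iop\langle n\odot q,k\rangle} = e^{-2\pi\iop\langle p\odot q,k\rangle}e^{2\pi\iop\langle l\odot q,k\rangle}$; the sum over $p$ then reassembles $\widehat u(j,w',k)$, leaving exactly $\sum_j\sum_{w'\in W}\sum_{l\in\bbZ^d}e^{2\pi\iop\langle l\odot q,k\rangle}a^{ij}_{w-(w'+l\odot q)}\widehat u(j,w',k)$, which is the second term of \eqref{eq:tildeHkdef}. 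Since each $a^{ij}$ has compact support, only finitely many $l$ contribute, so $k\mapsto\widetilde H(k)$ is a matrix of trigonometric polynomials, hence continuous and bounded, and the direct integral is a well-defined bounded decomposable operator; the computed identity then reads $\scrF_{q,\nu}H\scrF_{q,\nu}^* = \int^\oplus_{\bbT^d_*}\widetilde H(k)\,\frac{dk}{|\bbT^d_*|}$ on a dense subspace, hence everywhere.

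I do not expect a genuine obstacle here: this is the standard matrix-valued Floquet--Bloch transform, recorded for notational completeness. The only thing demanding care is the bookkeeping in the Toeplitz computation — tracking the substitution $l'=w'+p\odot q$, the relabeling $l=p-n$, and the resulting phase $e^{2\pi\iop\langle l\odot q,k\rangle}$ versus the original $e^{-2\pi\iop\langle n\odot q,k\rangle}$ — and making sure that the interchanges of sums are carried out on the finitely supported dense subspace before extending by density and boundedness.
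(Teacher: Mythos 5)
Your proposal is correct and follows essentially the same route as the paper, which simply records the result as following ``from a direct calculation''; the Toeplitz computation you carry out (substituting $l'=w'+p\odot q$, relabeling $l=p-n$, and reassembling $\widehat u(j,w',k)$) is precisely that calculation, and the unitarity argument is the standard one. No discrepancies to report.
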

\begin{proof} 
This follows from a direct calculation.
\end{proof}

For our purposes, the form of $H$ from Proposition~\ref{prop:floquetform1} is not optimal, since the dependence of $\widetilde{H}(k)$ on the Floquet multipliers can be highly nontrivial. In order to compute asymptotic terms, it is useful to have a different version of the Floquet transform that places the multipliers on a (block) diagonal. 

To describe this, let $Q = \#W = q_1q_2\cdots q_d$, and let us consider the $\nu Q$-dimensional vector space $\bbC^{\nu \times W}$ which consists of vectors $u:\{1,2,\ldots,\nu\} \times W \to \bbC$. For $k \in \bbR^d$, the corresponding fiber of the \emph{Floquet transform} $\scrF(k):\bbC^{\nu \times W} \to \bbC^{\nu \times W}$ is given by
\begin{equation}
    [\scrF(k)u](j,w) = \frac{1}{\sqrt{Q}} \sum_{n \in W} e^{-2\pi \iop  \langle w \odot q^* + k, n \rangle}u(j,n).
\end{equation}

It turns out that $\scrF(k) \widetilde H(k) \scrF(k)^*$ takes a simple form: the sum of a block-diagonal operator that depends only on $A$ and $k$ and a block-operator that depends only on $V$ (and not on $k$).

 For $1 \le i,j \le \nu$, define
\begin{equation}
    p_{ij}(z)=\sum_{n\in \bbZ^d} a^{ij}_{n}z^{-n}
\end{equation}
and consider $p(z) \in \bbC^{\nu \times \nu}$ given by
\begin{equation} \label{eq:matrixToeppzdef}
p(z)=\begin{bmatrix} p_{11}(z)&p_{12}(z)&\ldots&p_{1\nu }(z)\\
 p_{21}(z)&p_{22}(z)&\ldots&p_{2\nu }(z)\\
 \vdots & \vdots & \ddots & \vdots \\
  p_{\nu 1}(z)&p_{\nu 2}(z)&\ldots&p_{\nu \nu }(z).
\end{bmatrix}
\end{equation}
Given  $q \in \bbN^d$ and $n \in \bbZ^d$, the vector 
$\mu_n=\mu_{n,q} \in (\bbC^\star)^d$ is defined by
\begin{equation}\label{action}\mu_n^j = e^{2\pi \iop n_j/q_j}, \quad 1 \le j \le d.
\end{equation}

Using $\mu_n$ as in \eqref{action}, define
\begin{equation}\label{eq:diag:Blockmatrix}
\mathcal{D}_z(n,n^\prime) = p(\mu_{n}\odot z)\delta_{n,n^{\prime}},\,\,n,n^\prime \in W, 
\end{equation}
where $z=(e^{2\pi \iop k_1}, \ldots, e^{2\pi \iop k_d})$, which we abbreviate as $z = \exp(2\pi \iop k)$.

The linear operator $D_z$ on $\bbC^{\nu \times W}$ is then the (block-diagonal) operator $D_z = \diag(\mathcal{D}_z(n,n))$, that is,
\begin{equation} \label{eq:DZdef}
    [D_z u](w) = p(\mu_w \odot z) u(w), \quad w \in W,
\end{equation}
where similar to before, we denote $u(w) = [u(1,w),\ldots,u(\nu,w)]^\top \in \bbC^\nu$.
Equivalently, choosing an enumeration $W = \{w_1,\ldots,w_Q\}$,  one can write $D_z$ as a block-diagonal matrix
\begin{equation}
    D_z = \begin{bmatrix}
        p(\mu_{w_1}\odot z) \\
        & p(\mu_{w_2}\odot z) \\
        && \ddots \\
        &&& p(\mu_{w_Q}\odot z)
    \end{bmatrix}
\end{equation}
with $\nu \times \nu$ blocks on the diagonal and in which all unspecified blocks are zero.

Writing the discrete Fourier transform of a $q$-periodic function $g:\bbZ^d\rightarrow \bbC$ on $\Gamma^*$ by
$$ \widehat{g}_l=\frac{1}{\sqrt{Q}}\sum_{n\in W}e^{-2\pi \iop \langle l,n\rangle }g_n,\quad l\in \Gamma^*,$$
 we define $\mathcal{B}=\mathcal{B}_V$ by
\begin{equation}\label{eq:offdiag:Blockmatrix}
\mathcal{B}(n,n')=\diag
\left( \widehat V_1\left( q^* \odot(n-n')\right),
\ldots,
\widehat V_\nu \left(q^* \odot(n-n')\right)\right)
,\,\,n,n^\prime \in W,
\end{equation}
where we recall $q^* = (q_1^{-1},\ldots, q_d^{-1})$.

The corresponding operator $B=B_V$ on $\bbC^{\nu \times W}$ is given in block matrix form in a similar fashion to $D_z$:
\begin{equation}
    [B_V u](w) = \sum_{w' \in W} \mathcal{B}_V(w,w')u(w').
\end{equation}
Enumerating $W = \{w_1,\ldots,w_Q\}$ as before, one can write $B_V$ as the $Q \times Q$ block matrix
\begin{equation} \label{eq:BVdef}
    B_V = \begin{bmatrix}
        \mathcal{B}(w_1,w_1) & \mathcal{B}(w_1,w_2) & \cdots & \mathcal{B}(w_1,w_Q) \\
        \mathcal{B}(w_2,w_1) & \mathcal{B}(w_2,w_2) & \cdots & \mathcal{B}(w_2,w_Q) \\
        \vdots & \vdots & \ddots & \vdots \\
        \mathcal{B}(w_Q,w_1) & \mathcal{B}(w_Q,w_2) & \cdots & \mathcal{B}(w_Q,w_Q) \\
    \end{bmatrix}
\end{equation}
where each block is $\nu \times \nu$.

\begin{prop}\label{prop:floquetTransf}
Assume $V$ is $q$-periodic. 
Then 
\begin{equation}
    \scrF(k) \widetilde H(k) \scrF(k)^* = D_z+B_V
\end{equation}
for each $k$, where $D_z$ and $B_V$ are as above. In particular, writing $\scrF = \int^\oplus_{\bbT^d_*} \scrF(k)$, one has
\begin{equation}
    \scrF \widetilde{H} \scrF^* = \int^\oplus_{\bbT^d_*} (D_{\exp 2\pi \iop k} + B_V) \, \frac{dk}{|\bbT^d_*|}.
\end{equation}
\end{prop}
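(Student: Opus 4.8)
The plan is to establish the fiberwise identity $\scrF(k)\widetilde{H}(k)\scrF(k)^* = D_z + B_V$ for each fixed $k$; the direct-integral statement then follows at once by assembling the fibers over $\bbT^d_*$, exactly as in Proposition~\ref{prop:floquetform1}. Writing $\widetilde H(k) = \widetilde A(k) + V$, where $\widetilde A(k)$ denotes the Toeplitz part of \eqref{eq:tildeHkdef} (the triple sum) and $V$ is the diagonal operator $[Vu](i,w) = V(i,w)u(i,w)$, linearity of conjugation lets me treat the two summands separately, aiming for $\scrF(k)\widetilde A(k)\scrF(k)^* = D_z$ and $\scrF(k) V\scrF(k)^* = B_V$. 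One preliminary remark streamlines both computations: $\scrF(k) = \scrF(0)\circ M_k$, where $M_k$ is multiplication by $n\mapsto e^{-2\pi\iop\langle k,n\rangle}$ in the lattice variable. In particular each $\scrF(k)$ is unitary, being a unimodular twist of the (componentwise) normalized discrete Fourier transform $\scrF(0)$ on $\bbC^{\nu\times W}$.

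For the potential term, $M_k$ and $V$ commute (both act by multiplication in the lattice variable), so $\scrF(k) V\scrF(k)^* = \scrF(0)M_k V M_k^*\scrF(0)^* = \scrF(0) V\scrF(0)^*$, which is manifestly $k$-independent. It remains to identify $\scrF(0) V\scrF(0)^*$ with $B_V$: conjugating the diagonal operator $V$ by the scalar DFT $\scrF(0)$ turns it, on each of the $\nu$ components, into the circulant-type operator over $W$ whose $(m,w)$ matrix entry is the discrete Fourier coefficient of $n\mapsto V(i,n)$ at the frequency $q^*\odot(m-w)\in\Gamma^*$; collecting the $\nu$ scalar channels into $\nu\times\nu$ diagonal blocks reproduces precisely the block matrix $B_V$ of \eqref{eq:offdiag:Blockmatrix}--\eqref{eq:BVdef}. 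This is the standard correspondence between a periodic multiplication operator and convolution by its Fourier transform, once the normalization conventions are matched.

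The main computation is the diagonalization of the Toeplitz part. I would expand $[\scrF(k)\widetilde A(k)\scrF(k)^* v](i,w)$ as a sum over $j$, over the ``inner'' frequency $w''\in W$ coming from $\scrF(k)^*$, and over the variables $w'\in W$, $l\in\bbZ^d$ of \eqref{eq:tildeHkdef}. The key steps are: (i) use the unique decomposition $\bbZ^d = W + \Gamma$ to replace $(w',l)$ by the single running index $n = w - (w'+l\odot q)$, so that $a^{ij}_{w-w'-l\odot q}$ becomes $a^{ij}_n$; (ii) check that all phases depending on $l$ cancel --- the factor $e^{2\pi\iop\langle l\odot q,k\rangle}$ in \eqref{eq:tildeHkdef} cancels against the matching factor from $\scrF(k)^*$, and $e^{2\pi\iop\langle w''\odot q^*, l\odot q\rangle}=1$ because $\langle w''\odot q^*, l\odot q\rangle = \sum_j w''_j l_j\in\bbZ$; (iii) perform the remaining sum over the intermediate $W$-variable, which by orthogonality of characters on $\bbZ^d/\Gamma$ yields $Q\,\delta_{w,w''}$. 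What survives is
\[
[\scrF(k)\widetilde{A}(k)\scrF(k)^* v](i,w)
= \sum_{j=1}^\nu \Bigl(\sum_{n\in\bbZ^d} a^{ij}_n\, e^{-2\pi\iop\langle w\odot q^* + k,\, n\rangle}\Bigr) v(j,w),
\]
and since $e^{-2\pi\iop\langle w\odot q^* + k,\, n\rangle} = (\mu_w\odot z)^{-n}$ with $z = \exp(2\pi\iop k)$ and $\mu_w$ as in \eqref{action}, the bracketed quantity equals $p_{ij}(\mu_w\odot z)$ from \eqref{eq:matrixToeppzdef}. Hence the operator is $D_z$ as in \eqref{eq:DZdef}.

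I expect the only real obstacle to be bookkeeping: coordinating the change of variables $(w',l)\leftrightarrow n$ with the three layers of exponential phases (from $\scrF(k)$, from $\scrF(k)^*$, and the explicit $e^{2\pi\iop\langle l\odot q,k\rangle}$), so that the $l$-dependence visibly drops out through the integrality of $\langle w''\odot q^*, l\odot q\rangle$, while keeping the block and scalar indices straight throughout. There is no conceptual difficulty beyond this, and the final direct-integral identity is immediate once the fiberwise statement is in hand.
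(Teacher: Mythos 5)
Your proposal is correct and takes essentially the same route as the paper: the paper's ``proof'' is a one-line deferral to the direct calculation in \cite[Proposition 5.3]{FLM1}, and your outline carries out exactly that calculation (splitting off the potential, changing variables $(w',l)\leftrightarrow n$, cancelling the $l$-phases via integrality of $\langle w''\odot q^*, l\odot q\rangle$, and using character orthogonality to produce $\delta_{w,w''}$). Your parenthetical about matching normalization conventions in the $B_V$ term is apt, since with the paper's stated definition of $\widehat{g}_l$ the conjugation literally produces $Q^{-1/2}\widehat{V_i}$ rather than $\widehat{V_i}$ in each block; this is a harmless constant that does not affect any later use of the proposition.
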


\begin{proof}
    This follows from a calculation almost identical to that in the proof of \cite[Proposition 5.3] {FLM1}.
\end{proof}

\begin{prop} \label{prop:tcpCharInv}
    Assume $V$ is $q$-periodic, and let
    \begin{equation}
        \widetilde{\mathcal{P}}(z,\lambda) = \det(D_z-B_V-\lambda I),
    \end{equation}
    where $D_z$ and $B_V$ are as before. For any $w \in W$, 
    \begin{equation}
        \widetilde{\mathcal{P}}(\mu_w \odot z, \lambda) \equiv \widetilde{\mathcal{P}}(z,\lambda).
    \end{equation}
    In particular, $\widetilde{\mathcal{P}}(z,\lambda) = \mathcal{P}(z^{\odot q},\lambda)$ for a suitable Laurent polynomial $\mathcal{P}$.
\end{prop}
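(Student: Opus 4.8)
The plan is to realize the substitution $z \mapsto \mu_w \odot z$ as conjugation by a permutation matrix on $\bbC^{\nu \times W}$ and then invoke invariance of the determinant under conjugation; the reduction to a Laurent polynomial in $z^{\odot q}$ then follows by matching coefficients, as in \cite[Lemma~3.1]{FLM1}.

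First I would record the combinatorial fact underlying everything: the assignment $n \mapsto \mu_n$ from \eqref{action} depends only on the class of $n$ in the finite group $G = \bbZ^d/\Gamma \cong \prod_{j=1}^d \bbZ/q_j\bbZ$, of which $W$ is a complete set of representatives, since $\mu_{n + l\odot q} = \mu_n$. Fix $w \in W$ and let $\sigma_w : W \to W$ be the bijection sending $n$ to the representative in $W$ of $n - w$. Let $\Pi_w$ be the corresponding block-permutation operator on $\bbC^{\nu \times W} = \bigoplus_{n \in W} \bbC^\nu$, carrying the $n$-th copy of $\bbC^\nu$ onto the $\sigma_w(n)$-th.

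Next I would verify two conjugation identities. Since $D_z$ acts on the $n$-th block by $p(\mu_n \odot z)$ (see \eqref{eq:DZdef}), the operator $\Pi_w D_z \Pi_w^{-1}$ acts on the $\sigma_w(n)$-th block by $p(\mu_n \odot z)$; writing $m = \sigma_w(n)$, so that $n \equiv m + w$ in $G$ and hence $\mu_n = \mu_m \odot \mu_w$, this block equals $p(\mu_m \odot (\mu_w \odot z))$. Therefore $\Pi_w D_z \Pi_w^{-1} = D_{\mu_w \odot z}$. On the other hand, by \eqref{eq:offdiag:Blockmatrix} the $(n,n')$ block of $B_V$ is $\mathcal{B}_V(n,n')$, which depends on $n,n'$ only through the class of $n - n'$ in $G$, because $\widehat{V}_j(q^* \odot (n-n'))$ is unchanged under $n - n' \mapsto n - n' + l\odot q$. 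Since $\sigma_w(n) - \sigma_w(n') \equiv n - n'$ in $G$, it follows that $\Pi_w B_V \Pi_w^{-1} = B_V$, and of course $\Pi_w(\lambda I)\Pi_w^{-1} = \lambda I$. Taking determinants in $\Pi_w(D_z - B_V - \lambda I)\Pi_w^{-1} = D_{\mu_w \odot z} - B_V - \lambda I$ yields $\widetilde{\mathcal{P}}(\mu_w \odot z, \lambda) = \widetilde{\mathcal{P}}(z,\lambda)$ for every $w \in W$.

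For the final assertion, note that $\widetilde{\mathcal{P}}(z,\lambda) = \sum_\alpha c_\alpha(\lambda) z^\alpha$ is a genuine Laurent polynomial in $z$ with coefficients in $\bbC[\lambda]$, since the entries of $D_z$ are Laurent polynomials in $z$ and those of $B_V$ are constant. Matching coefficients of $z^\alpha$ in the invariance just established forces $c_\alpha(\lambda)\,\mu_w^\alpha = c_\alpha(\lambda)$ for all $w \in W$, so taking $w = e_j$ gives $e^{2\pi \iop \alpha_j / q_j} = 1$, i.e. $q_j \mid \alpha_j$, whenever $c_\alpha \not\equiv 0$ (the condition $q_j\mid\alpha_j$ being vacuous when $q_j = 1$). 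Hence every monomial of $\widetilde{\mathcal{P}}$ is a monomial in $z^{\odot q} = (z_1^{q_1},\ldots,z_d^{q_d})$, so $\widetilde{\mathcal{P}}(z,\lambda) = \mathcal{P}(z^{\odot q},\lambda)$ for a suitable Laurent polynomial $\mathcal{P}$; equivalently, one applies \cite[Lemma~3.1]{FLM1} coefficientwise in $\lambda$. I do not expect a genuine obstacle here: the entire content is that $z \mapsto \mu_w \odot z$ merely permutes the diagonal blocks of $D_z$ while $B_V$ has a $G$-circulant block structure that is untouched. The only points needing care are fixing the direction of $\sigma_w$ (the representative of $n - w$, not $n + w$, so the blocks get relabeled by $\mu_w$ rather than its inverse) and checking that $\mathcal{B}_V(n,n')$ indeed depends only on $n - n'$ modulo $\Gamma$.
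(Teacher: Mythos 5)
Your proof is correct and follows essentially the same route as the paper: the paper's operator $T_w$, defined by $[T_w u](j,w') = u(j,\{w'-w\})$, is exactly your block-permutation $\Pi_w$, and both arguments conclude via $T_w^* D_z T_w = D_{\mu_w \odot z}$, $T_w^* B_V T_w = B_V$, invariance of the determinant under conjugation, and \cite[Lemma~3.1]{FLM1} (whose coefficient-matching content you simply spell out).
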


\begin{proof}
    For $w \in W$, let $T_w:\bbC^{\nu \times W} \to \bbC^{\nu \times W}$ denote the operator
    \[[T_w u](j,w') = u(j,\{w'-w\}), \quad 1 \le j \le \nu, \ w' \in W,\]
where for $n \in \bbZ^d$, $\{n\}$ denotes the unique element of $W$ that is equivalent to $n$ modulo $\Gamma$. By direct calculations, $T_w^* D_z T_w = D_{\mu_w \odot z}$ and $T_w^* B_V T_w = B_V$, so
\begin{align*}
\widetilde{\mathcal{P}}(\mu_w \odot z,\lambda)
 = \det(T_w^* (D_z  + B_V -\lambda)T_w) 
 = \widetilde{\mathcal{P}}(z,\lambda),
\end{align*}
as desired. The second statement then follows immediately from \cite[Lemma~3.1]{FLM1}.
\end{proof}

Let us conclude this section by pointing out that the framework described above includes all finite-range translation-invariant operators on periodic graphs in a canonical fashion.

More precisely, recall that a \emph{graph}  $\calG$  consists of a nonempty set $\calV$ of vertices and $\calE$  a set of unordered pairs of elements of $\calV$, called edges. A \emph{$\bbZ^d$-periodic graph} is a locally finite graph $\calG$ on which $\bbZ^d$ acts freely and cocompactly. We denote the action of $\bbZ^d$ additively, so the assumption that $\bbZ^d$ acts freely can be expressed as
\[v+{n} \neq v, \quad \forall {n} \neq \mathbf{0}_d.\]
The translations $v \mapsto v+{n}$ induce unitary operators $U_{ n}:\ell^2(\calV) \to \ell^2(\calV)$ via $[U_{ n}f](v)=f(v+{n})$. One says that a bounded operator $A$ on $\ell^2(\calV)$ is \emph{translation-invariant} if $AU_{ n} = U_{ n}A$ for all ${n} \in \bbZ^d$.

Consider a translation-invariant operator $A$
and choose a fundamental domain $\mathcal{V}_f$ so that $\mathcal{V} = \bigcup_{ n \in \bbZ^d}(\mathcal{V}_f + {n})$. Let $\nu = \#\calV_f$, and write $\calV_f = \{v_1,\ldots,v_\nu\}$. Thus, any $v \in \calV$ can be written as $v_j+{n}$ for some $j \in \{1,2,\ldots, \nu\}$ and ${n} \in \bbZ^d$. 

We further assume that $A$ has finite range in the sense that there exists $r>0$ such that
\begin{equation}
    \langle \delta_{v_i+n}, A\delta_{v_j+m} \rangle =0 \quad \text{whenever } |n-m|> r.
\end{equation}
The map $\Phi:\ell^2(\calV) \to \ell^2(\bbZ^d,\bbC^\nu)$ given by
\[[\Phi f](j,n) =  f(v_j+{n})\]
is unitary, and $\Phi A \Phi^{*}$ is the matrix Toeplitz operator corresponding to coefficients $a_n^{ij}$ given by
\begin{equation}
    a^{ij}_n = \langle \delta_{v_i+n}, A\delta_{v_j} \rangle,
\end{equation}
 as above.

\section{Applications to the Lieb Lattice}\label{Sec:Lieb}

 The Lieb lattice in dimension $d=2$ is the graph with vertices $\calV$ consisting of all $n \in \bbZ^2$ such that $n_1$ and $n_2$ are not both odd. One then connects a pair of vertices $n$ and $m$ by an edge whenever $\|n-m\|_1=  1$. Compare Figure~\ref{fig:Liebd2}; there, the black vertices are those that belong to $2\bbZ \oplus 2 \bbZ$.
 
 We let $A$ denote the adjacency operator on the Lieb lattice and the corresponding operator $A+V$ acting on $\ell^2(\bbZ^2, \bbC^{3})$, where $V: \bbZ^2 \to \bbC^3$ is $q$-periodic. Let us exploit the connection discussed in the previous section to write this as a matrix-valued periodic operator. Referring to Figure~\ref{fig:Liebd2}, let $\psi_1$, $\psi_2$, and $\psi_3$ respectively represent the value of $\psi \in \ell^2(\bbZ^2, \bbC^{3})$ at the corresponding black, red, or blue vertex respectively. Writing $\{e_1,e_2\} \subseteq \bbZ^2$ for the standard basis and defining $\Delta_{\pm e_j}$ on $\ell^2(\bbZ^2)$ by 
 \[
 [\Delta_{\pm e_j}\psi](n) =\psi(n) +  \psi(n \pm e_j), 
 \quad \psi \in \ell^2(\bbZ^2),\]
the reader can theck that
 \begin{equation}\label{eq:AdjLieb}
 [A\psi](n)=\begin{bmatrix} [\Delta_{-e_1}\psi_2](n) + [\Delta_{-e_2}\psi_3](n)\\
[\Delta_{e_1}\psi_1](n)\\
[\Delta_{e_2}\psi_1](n)
\end{bmatrix}.
\end{equation}
In matrix form:
\begin{equation} \label{eq:LiebasMatrixToeo}
     A = \begin{bmatrix}
         0 & \Delta_{-e_1} &  \Delta_{-e_2} \\
         \Delta_{e_1} & 0 & 0  \\
         \Delta_{e_2} & 0 & 0
     \end{bmatrix}.
 \end{equation}

 Let us now use Proposition~\ref{prop:floquetTransf} to write the Floquet matrix of $A+V$, with $A$ as in \eqref{eq:AdjLieb}, as the $3Q\times 3Q$ matrix
$D_z + B_V$.

By \eqref{eq:matrixToeppzdef} and \eqref{eq:LiebasMatrixToeo}, $p(z)$ is given by
\begin{equation}\label{eq:Lieb}
    p(z) =\begin{bmatrix}0&1+z^{-1}_1&1+z^{-1}_2\\
    1+z_1 & 0&0\\
    1+z_2 & 0&0
    \end{bmatrix}
\end{equation}
so \begin{equation}
    \det (p(z) -\lambda I)=(-\lambda)(z^{-1}_1+z_1+z^{-1}_2+z_2+\lambda^2+4)
\end{equation}
Notice that the factor of $\lambda$ shows the existence of a flat band of the operator $A$. Indeed, one can check that
\begin{equation}
    \psi(n) = \begin{cases}
        +1 & n = (0,1),\ (2,1)\\
        -1 & n= (1,0),\ (1,2)\\
        0 & \text{otherwise}
    \end{cases}
\end{equation}
defines an eigenfunction of $A$  with eignevalue $0$ having compact support.

Recall that the matrix $B = B_V$ is a $3Q \times 3Q$ matrix given by \eqref{eq:offdiag:Blockmatrix} and \eqref{eq:BVdef} with blocks of the form
\begin{equation}\label{eq:offdiagLiebnd}
\mathcal{B}(n,n')=
\begin{bmatrix}
\widehat{V_1}(n-n')&0&0\\
0&\widehat{V_2}(n-n')&0\\
0&0&{\widehat V}_{3}(n-n')
\end{bmatrix}
\quad n, n' \in W.
\end{equation}

In comparison to the adjacency operator of $\bbZ^2$,  the inclusion of extra vertices accounts for additional interactions, which increases the complexity of the study of the Fermi variety in a substantial fashion, especially in the presence of a periodic potential. For nontrivial choices of $V$, counting the number of irreducible components of $\widetilde{\mathcal{P}}(z)=\det (D_z+B_V-\lambda I)$ and the related $\mathcal{P}(z)$ is significantly harder due to the contributions from the off-diagonal blocks. We shall explain below how irreducibility of the latter follows from Theorem \ref{thm2} for all but finitely many values of $\lambda$.
 
Before proving the main irreducibility result for the Lieb lattice, we will need a suitable technical result about the asymptotic terms. Let us fix some notation. Fix $\lambda \in \bbC$. Let $\widetilde {\mathcal{P}}(z)  = \det(D_z + B_V - \lambda I)$ with ${D}_z$ and $B_V$ given by \eqref{eq:DZdef} and \eqref{eq:BVdef}. By Proposition~\ref{prop:tcpCharInv}, there is a polynomial $\mathcal{P}$ such that $\widetilde{\mathcal{P}}(z) = \mathcal{P}(z^{\odot q})$. Define $\widetilde{h}_0$ $h_0$, $\widetilde{h}_{\infty}$ and $h_{\infty}$ as in Section~\ref{Sec:setting}. 

Since we are interested in the \emph{Fermi} variety, we want to consider $\mathcal{P}$ as a polynomial in the variable $z$ with $\lambda$ fixed. Thus, we generally write $\mathcal{P}(z)$ unless we need to explicitly mention the dependence on $\lambda$, in which case we write $\mathcal{P}(z;\lambda)$, with similar notation for $h_0$, $h_\infty$, and so on.

\begin{lemma}\label{Lem:compcountLieb} 
If $\gcd(q_1,q_2)=1$, then $h_0$ and $h_{\infty}$ are irreducible for all but finitely many $\lambda$.
\end{lemma}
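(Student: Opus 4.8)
The plan is to compute the asymptotic terms $\widetilde h_0$ and $\widetilde h_\infty$ essentially explicitly, read off $h_0$ and $h_\infty$, and recognize them as binomials whose irreducibility is governed exactly by $\gcd(q_1,q_2)$. First I would regroup the $3Q$ coordinates of $\bbC^{3\times W}$ by vertex type, so that $\widetilde{\mathcal P}(z)=\det(D_z+B_V-\lambda I)$ becomes the determinant of a $3\times 3$ array of $Q\times Q$ blocks. Writing $\mathcal V_i:=[\widehat{V_i}(q^*\odot(w-w'))]_{w,w'\in W}$ for the $Q\times Q$ Fourier matrix of the potential on the $i$-th sublattice and $\Lambda_j:=\diag_{w\in W}(\mu_w^j)$, and using \eqref{eq:Lieb}, \eqref{eq:offdiagLiebnd} together with the fact that red and blue sites interact only with black ones (so the red--red, blue--blue and red--blue super-blocks carry no $z$-dependence), this array is
\[
D_z+B_V-\lambda I=
\begin{bmatrix}
\mathcal V_1-\lambda I & I+z_1^{-1}\Lambda_1^{-1} & I+z_2^{-1}\Lambda_2^{-1}\\
I+z_1\Lambda_1 & \mathcal V_2-\lambda I & 0\\
I+z_2\Lambda_2 & 0 & \mathcal V_3-\lambda I
\end{bmatrix}.
\]
For $\lambda$ outside the finitely many eigenvalues of $\mathcal V_2$ and of $\mathcal V_3$, the Schur complement of the invertible, $z$-independent block $\diag(\mathcal V_2-\lambda I,\mathcal V_3-\lambda I)$ gives $\widetilde{\mathcal P}(z)=c(\lambda)\det S(z)$ with $c(\lambda):=\det(\mathcal V_2-\lambda I)\det(\mathcal V_3-\lambda I)\ne 0$ a constant and
\begin{align*}
S(z)={}&\mathcal V_1-\lambda I-(I+z_1^{-1}\Lambda_1^{-1})(\mathcal V_2-\lambda I)^{-1}(I+z_1\Lambda_1)\\
&-(I+z_2^{-1}\Lambda_2^{-1})(\mathcal V_3-\lambda I)^{-1}(I+z_2\Lambda_2).
\end{align*}

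Next I would isolate the lowest-degree part. Every entry of $S(z)$ has total degree $\ge -1$, with degree-$(-1)$ part the corresponding entry of $z_1^{-1}P+z_2^{-1}R$, where $P:=-\Lambda_1^{-1}(\mathcal V_2-\lambda I)^{-1}$ and $R:=-\Lambda_2^{-1}(\mathcal V_3-\lambda I)^{-1}$ are invertible; hence $\det S$ has lowest total degree $-Q$ and its degree-$(-Q)$ part is
\[
\det(z_1^{-1}P+z_2^{-1}R)=\det(P)\,(z_1z_2)^{-Q}\prod_{j=1}^{Q}(z_2+\rho_j z_1),
\]
where $\rho_1,\dots,\rho_Q$ are the eigenvalues of $P^{-1}R=(\mathcal V_2-\lambda I)\Lambda_1\Lambda_2^{-1}(\mathcal V_3-\lambda I)^{-1}$, all nonzero. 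Since this is not identically zero it is the genuine lowest-degree component of $\widetilde{\mathcal P}$; moreover a glance at the block matrix (the symbols $z_1^{-1}$ and $z_2^{-1}$ each occur in exactly $Q$ of its entries, one per row and per column, all other entries having nonnegative exponents) shows that the lowest powers of $z_1$ and of $z_2$ in $\widetilde{\mathcal P}$ are both $-Q$, so $\widetilde{\mathcal P}^+=(z_1z_2)^Q\widetilde{\mathcal P}$ and, since taking the lowest-degree component commutes with multiplying by a monomial,
\[
\widetilde h_0(z)=C\prod_{j=1}^Q(z_2+\rho_j z_1),\qquad C:=c(\lambda)\det(P)\ne 0.
\]

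I would then exploit the $\mu_w$-symmetry. By Proposition~\ref{prop:tcpCharInv}, $\widetilde{\mathcal P}$, hence $\widetilde{\mathcal P}^+$ and $\widetilde h_0$, is invariant under $z\mapsto\mu_w\odot z$ (using $(\mu_w^j)^Q=1$). Writing $\eta_w:=\mu_w^1/\mu_w^2$, this invariance forces $\prod_j(z_2+\rho_j\eta_w z_1)=\prod_j(z_2+\rho_j z_1)$ for each $w\in W$, i.e.\ multiplication by $\eta_w$ permutes the multiset $\{\rho_1,\dots,\rho_Q\}$. When $\gcd(q_1,q_2)=1$ the Chinese remainder theorem makes $(w_1,w_2)\mapsto w_1q_2-w_2q_1\bmod Q$ a bijection of $W$ onto $\bbZ/Q\bbZ$, so $\{\eta_w:w\in W\}$ is the full group of $Q$-th roots of unity, and a size-$Q$ multiset of nonzero numbers invariant under all of these must be a single free orbit $\{\zeta\rho_1:\zeta^Q=1\}$. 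Hence $\prod_j(z_2+\rho_j z_1)=z_2^Q-(-1)^Q\rho_1^Q z_1^Q$, so $\widetilde h_0(z)=C(z_2^Q-c_0z_1^Q)$ with $c_0\ne 0$, and $\widetilde h_0(z)=h_0(z^{\odot q})$ gives $h_0(x_1,x_2)=C(x_2^{q_1}-c_0x_1^{q_2})$. This binomial is irreducible in $\bbC[x_1,x_2]$ precisely because $\gcd(q_1,q_2)=1$: over $\bbC(x_1)$ it is $X^{q_1}-c_0x_1^{q_2}$, and $c_0x_1^{q_2}$ is not a $p$-th power in $\bbC(x_1)$ for any prime $p\mid q_1$ (else $p\mid q_2$), while the extra condition at $p=2$ holds because $q_2$ is odd whenever $4\mid q_1$; Gauss's lemma then gives irreducibility in $\bbC[x_1,x_2]$. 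Thus $p_1=1$ for all such $\lambda$.

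For $h_\infty$ I would run the identical argument with $z_2$ replaced by $z_2^{-1}$: only the $(1,3)$ and $(3,1)$ super-blocks change, $\widetilde{\mathcal P}(z_1,z_2^{-1})$ is still invariant under $z\mapsto\mu_w\odot z$ (up to a relabeling of $W$), the same Schur complement produces lowest-degree part $\det(P)(z_1z_2)^{-Q}\prod_j(z_2+\widetilde\rho_j z_1)$ with $\widetilde\rho_j$ the nonzero eigenvalues of $(\mathcal V_2-\lambda I)\Lambda_1(\mathcal V_3-\lambda I)^{-1}\Lambda_2$, and the same symmetry-plus-binomial argument yields $h_\infty(x_1,x_2)=C'(x_2^{q_1}-c_\infty x_1^{q_2})$ with $c_\infty\ne 0$, irreducible; so $p_2=1$ as well, the exceptional $\lambda$ lying in the finite union of the spectra of $\mathcal V_2$ and $\mathcal V_3$. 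The step I expect to be the main obstacle is this middle one: isolating the lowest-degree component cleanly — the Schur-complement reduction, the non-vanishing of the candidate lowest component, and the normalizing exponents — and then forcing the product of $Q$ linear forms into binomial shape by combining the $\mu_w$-invariance of Proposition~\ref{prop:tcpCharInv} with the arithmetic of $\gcd(q_1,q_2)=1$; the remaining ingredients (Schur complement, the binomial irreducibility criterion, the Chinese remainder theorem) are standard.
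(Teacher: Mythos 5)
Your proof is correct, and it shares the paper's overall skeleton (compute the lowest total-degree component of $\widetilde{\mathcal P}$, use the $\mu_w$-invariance from Proposition~\ref{prop:tcpCharInv} together with $\gcd(q_1,q_2)=1$ to force that component into the binomial shape $c_1z_1^Q+c_2z_2^Q$, then read off $h_0=c_1x_1^{q_2}+c_2x_2^{q_1}$ and invoke irreducibility of a two-term quasi-homogeneous polynomial with coprime exponents). The mechanics differ in two places, both legitimately. First, the paper never computes the lowest-degree component in closed form: it argues combinatorially on exponent supports (total degree $-Q$, each exponent a multiple of $q_j$ by invariance, coprimality leaving only $z_1^{-Q}$ and $z_2^{-Q}$) and then must separately prove the two coefficients $c_1(\lambda),c_2(\lambda)$ are not identically zero, which it does by exhibiting an explicit permutation in the determinant expansion producing the top $\lambda$-degree term $(-1)^Q(\widehat{V_3}(0)-\lambda)^Qz_1^{-Q}$. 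Your Schur complement with respect to the $z$-independent block $\diag(\mathcal V_2-\lambda I,\mathcal V_3-\lambda I)$ replaces both steps: it yields the lowest component as $\det(P)(z_1z_2)^{-Q}\prod_j(z_2+\rho_jz_1)$ with all $\rho_j\neq 0$, so nonvanishing of both coefficients is automatic for $\lambda$ outside the (explicitly identified, finite) union of the spectra of $\mathcal V_2$ and $\mathcal V_3$, and the invariance is then applied to the root multiset $\{\rho_j\}$ via the Chinese remainder theorem rather than to the exponents. This buys an explicit exceptional set and a cleaner nonvanishing argument at the cost of more linear algebra. One thing your route does not directly deliver is the fact, established in the paper's permutation argument, that $c_1(\lambda)$ and $c_2(\lambda)$ are polynomials of degree exactly $Q$ in $\lambda$; that degree count is reused in the proof of Corollary~\ref{thm:Liebd} to rule out the borderline factorization in Theorem~\ref{thm2}, so if you adopt your version you would still need to extract it (e.g.\ from $\rho_1^Q=\det(P^{-1}R)=\pm\det(\mathcal V_2-\lambda I)/\det(\mathcal V_3-\lambda I)$ together with $C=c(\lambda)\det(P)$) or keep the paper's permutation computation alongside.
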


\begin{proof}
We will only prove irreducibility of $h_0$ since the result for $h_{\infty}$ is analogous. From the structure of $D_z$ and $B_V$ one readily checks that the lowest degree term of $\widetilde{\mathcal{P}}(z) = \det(D_z-B_V -\lambda I)$ is of the form
$$r_0(z)=\sum c_{\alpha }z^\alpha$$
where the summation runs over those $ \alpha \in [-q_1q_2,0]^2\cap \bbZ^2$ satisfying $\alpha_1+\alpha_2=-Q = -q_1q_2$.
Moreover, since $r_0(z)=r_0(\mu_n\odot z)$ for each $n\in W$, the only terms in $r_0(z)$ that can have $c_\alpha \neq 0$ are the terms with $\alpha_1=m_1q_1$ and $\alpha_2=m_2q_2$ for integers $-q_2 \leq  m_1\leq 0$ and $-q_1 \leq m_2\leq 0$. Since $\gcd(q_1,q_2)=1$ the only solutions to 
$$m_1q_1+m_2q_2=-q_1q_2$$
within the given range are $(m_1,m_2)=(-q_2,0)$ and $(m_1,m_2)=(0,-q_1)$. In particular,
$$
r_0(z)=c_2z^{-Q}_1+c_1z^{-Q}_2,$$
where $c_j = c_j(\lambda)$ for $j=1,2.$
From this, it follows that
$$
\widetilde h_0(z)=(z_1z_2)^Qr_0(z)=c_1z^{Q}_1+c_2z^{Q}_2.
$$
We shall see below that $c_1(\lambda)$ and $c_2(\lambda)$ are polynomials in $\lambda$ that do not vanish identically. Consequently, $h_0(z)=c_1z^{q_2}_1+c_2z^{q_1}_2$  is irreducible for all but finitely many $\lambda$; indeed, it is irreducible precisely for those $\lambda $ for which $c_1(\lambda)$ and $c_2(\lambda)$ are both nonzero.
To finish the proof we now show $c_2(\lambda) \not\equiv 0$ (the argument for $c_1(\lambda)$ is analogous) by expanding $\det(D_z + B_V - \lambda I)$ using permutations $\sigma:\{1,\ldots,3Q\}\rightarrow \{1,\ldots,3Q\}$. Note that the highest degree term in $c_2(\lambda)$ is achieved precisely by the permutation $\tau$ given by
\begin{equation}\tau(j)=
    \begin{cases}
     j+1 & \text{if } j\equiv 1\ \mathrm{mod} \ {3}\\
     j-1 & \text{if } j\equiv 2 \ \mathrm{mod} \ {3}\\
     j & \text{otherwise}
    \end{cases}\,,
\end{equation}
which yields a term in $\det(D_z + B_V - \lambda I)$ of the form  $(-1)^Q(\widehat{V_3}(0) - \lambda)^{Q}z_1^{-Q}$. 
It follows that $c_2(\lambda)$ is a polynomial of degree $Q$ in $\lambda$ which completes the proof.
\end{proof}

\begin{lemma}\label{Lem:meetsLieb} Let $\widetilde{\mathcal{P}}(z) = \det(D_z + B_V-\lambda I)$ as above with $\mathrm{gcd}(q_1,q_2)=1$. Let $\mathcal{P}(z)$ satisfy $\widetilde {\mathcal{P}}(z)=\mathcal{P}(z^{\odot q})$ for all $z\in \mathbb{C}$. Then, for all but finitely many values of $\lambda$, each factor of $\mathcal{P}(z)$ meets either $(0,0)$ or $(0,\infty)$.

\end{lemma}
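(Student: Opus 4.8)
The plan is to verify assumption \ref{assump1} for the Lieb dispersion polynomial $\mathcal{P}$ by reducing it to two explicit monomiality statements for the restrictions of $\mathcal{P}^+$ to the coordinate axes, both of which can be read off from the block structure of $D_z+B_V-\lambda I$ in essentially the same way as the computation at the end of the proof of Lemma~\ref{Lem:compcountLieb}. Concretely, I would first show that for all but finitely many $\lambda$,
\begin{equation}\label{eq:meetsLieb:F1F2}
\mathcal{P}^+(0,z_2)=\phi(\lambda)\,z_2^{q_1}\qquad\text{and}\qquad \mathcal{P}^+(z_1,0)=\psi(\lambda)\,z_1^{q_2},
\end{equation}
where $\phi,\psi$ are polynomials in $\lambda$ that do not vanish identically.

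To establish \eqref{eq:meetsLieb:F1F2} I would expand $\widetilde{\mathcal{P}}(z)=\det(D_z+B_V-\lambda I)$ as a Laurent polynomial in $z_1$. Because $B_V$ is block diagonal in the vertex index and every diagonal block $p(\mu_w\odot z)$ has vanishing diagonal, the only entries of $D_z+B_V-\lambda I$ carrying a negative power of $z_1$ are the $Q$ entries in positions $(1,w)$--$(2,w)$, each equal to $1+(\mu_w^1 z_1)^{-1}$. Hence the minimal $z_1$-power occurring in $\widetilde{\mathcal{P}}$ is $z_1^{-Q}$, and any term in the Leibniz expansion achieving it must, for every $w$, use the summand $(\mu_w^1 z_1)^{-1}$ there and (being then forced) the summand $1$ in position $(2,w)$--$(1,w)$; the rows and columns indexed by the third vertex then contribute exactly $\det\big[\widehat{V_3}(q^*\odot(w-w'))-\lambda\,\delta_{w,w'}\big]_{w,w'\in W}$, which is free of $z$. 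Thus the coefficient of $z_1^{-Q}$ equals $\pm\big(\prod_{w\in W}\mu_w^1\big)^{-1}\det\big[\widehat{V_3}(q^*\odot(w-w'))-\lambda\,\delta_{w,w'}\big]$, which is independent of $z_2$, and since it equals $(-\lambda)^{Q}$ plus lower-order terms it is not the zero polynomial in $\lambda$. Passing from $\widetilde{\mathcal{P}}$ to $\mathcal{P}$ via $\widetilde{\mathcal{P}}(z)=\mathcal{P}(z^{\odot q})$ yields the first identity in \eqref{eq:meetsLieb:F1F2}; the second follows by the symmetric argument interchanging $z_1$ with $z_2$ and the second vertex with the third.

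Granting \eqref{eq:meetsLieb:F1F2}, here is how I would finish. For $\lambda$ outside the finite exceptional set, \eqref{eq:meetsLieb:F1F2} forces $z_1\nmid\mathcal{P}^+$ and $z_2\nmid\mathcal{P}^+$, so every irreducible factor $g$ of $\mathcal{P}^+$ is a genuine non-monomial polynomial. Writing $\mathcal{P}^+=\prod_i g_i$ and setting $z_1=0$, each $g_i(0,z_2)$ divides the monomial $\phi(\lambda)z_2^{q_1}$ and is therefore itself a nonzero monomial $\gamma_i z_2^{m_i}$. If $m_i\ge 1$, then $\mathcal{N}(g_i)$ meets $\{z_1=0\}$ only at $(0,m_i)$, and since $g_i$ is not divisible by $z_2$ and $g_i(0,0)=0$ it also contains a lattice point $(k_i,0)$ with $k_i\ge 1$; the boundary of $\mathcal{N}(g_i)$ facing the origin then contains a genuine edge, so $\underline{g_i}_{l}$ fails to be a monomial for a suitable $l\in\bbN^2$, and the standard criterion relating such facial polynomials to points of $\overline{V(g_i)}$ (cf.\ \cite{FLM1, LiuPreprint:Irreducibility}) gives that $g_i$ meets $(0,0)$. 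If instead $m_i=0$, then $g_i(0,z_2)$ is a nonzero constant; as $g_i$ is non-monomial while $g_i(z_1,0)$ is a (nonzero) monomial by \eqref{eq:meetsLieb:F1F2}, $g_i$ must genuinely involve $z_2$, so $\deg_{z_2}g_i\ge 1$. Applying the previous case to $g_i(\hat z)=g_i(z_1,z_2^{-1})$, whose restriction to $\{z_1=0\}$ is the monomial $\gamma_i z_2^{\deg_{z_2}g_i}$ with positive exponent, shows that $g_i(\hat z)$ meets $(0,0)$, i.e.\ $g_i$ meets $(0,\infty)$. In either case every irreducible factor of $\mathcal{P}$ meets $(0,0)$ or $(0,\infty)$.

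The step I expect to require the most care is the permutation bookkeeping leading to \eqref{eq:meetsLieb:F1F2}: one must check both that the coefficient of the extremal power of $z_1$ is genuinely independent of $z_2$ — which relies on the specific Lieb structure, namely the zero diagonal of each $p(\mu_w\odot z)$ together with the vertex-diagonal form of $B_V$ — and that it is not identically zero in $\lambda$, for which identifying its top $\lambda$-degree term as $(-\lambda)^Q$, coming from the $-\lambda I$ contribution to the surviving block, is the cleanest route, exactly parallel to the conclusion of the proof of Lemma~\ref{Lem:compcountLieb}.
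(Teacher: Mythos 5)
Your proposal is correct, and its computational core --- showing via the forced permutation structure that the coefficient of the extremal power $z_1^{-Q}$ in $\det(D_z+B_V-\lambda I)$ is independent of $z_2$ and equals a nonzero degree-$Q$ polynomial in $\lambda$ --- is exactly the computation the paper performs (in the proof of Lemma~\ref{Lem:compcountLieb}, reused here). Where you diverge is in how that input is converted into the ``meets'' statement. The paper argues analytically: for any sequence $z(n)\in V(\widetilde{\mathcal{P}})$ with $z_1(n)\to 0$ and $z_2(n)\to y_2\in\bbC^\star$, the expansion $\widetilde{\mathcal{P}}(z(n))=c_2(\lambda)z_1(n)^{-Q}+O(z_1(n)^{-(Q-1)})$ forces $\widetilde{\mathcal{P}}(z(n))\to\infty$, a contradiction, so the only accumulation points over $\{z_1=0\}$ are $(0,0)$ and $(0,\infty)$. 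You instead recast the same information algebraically as the statement that $\mathcal{P}^+(0,z_2)$ and $\mathcal{P}^+(z_1,0)$ are nonzero monomials, and then run a factor-by-factor Newton-polytope argument. Your route is longer, but it makes explicit a point the paper leaves implicit: that every irreducible factor actually has a limit point over $\{z_1=0\}$ in the first place (your case analysis excludes factors depending on $z_1$ alone, which the paper's sequential argument does not address directly, though they are ruled out for the same underlying reason, namely that the extremal coefficients are nonvanishing constants in $z$). Both arguments produce the same exceptional set, the zeros of $\det\bigl[\widehat V_2(q^*\odot(w-w'))-\lambda\delta_{w,w'}\bigr]\cdot\det\bigl[\widehat V_3(q^*\odot(w-w'))-\lambda\delta_{w,w'}\bigr]$, and the one external ingredient you invoke --- that a non-monomial facial polynomial $\underline{g}_{l}$ with $l\in\bbN^2$ forces $g$ to meet $\mathbf{0}_2$ --- is indeed the standard criterion from \cite{FLM1,LiuPreprint:Irreducibility}, so the citation is appropriate.
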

\begin{proof}
Assume $z(n) = (z_1(n),z_2(n) ) \in V(\widetilde{\mathcal{P}})$ with $z(n) \to y=(y_1,y_2) \in \overline{\bbC}^2$ and $y_1=0$. For all but finitely many $\lambda$, this forces $y_2 \in \{0,\infty\}$. Indeed, if $y_2 \in \bbC^\star$, then we follow the  the proof of Lemma \ref{Lem:compcountLieb} to conclude that 
\[\widetilde{\mathcal{P}}(z(n))
=c_2(\lambda)\frac{1}{z_1(n)^Q}+O(z_1(n)^{-(Q-1)}).\]
Whenever $c_2(\lambda)\neq 0$, this gives $\widetilde{\mathcal{P}}(z(n))\to \infty$, which is inconsistent with $z(n) \in V(\widetilde{\mathcal{P}})$.

Since $z(n)\to (0,0)$ (respectively, $z(n) \to (0,\infty)$) if and only if $(z(n))^{\odot q}\to (0,0)$ (respectively, $(z(n))^{\odot q}\to (0,\infty)$) it readily follows that each factor of $\mathcal{P}(z)$  meets either $(0,0)$ or  $(0,\infty)$.
\end{proof}

\begin{coro}\label{thm:Liebd}
If $\gcd(q_1,q_2)=1$ the Fermi variety of the Schr\"odinger operator $A+V$  on the Lieb lattice given by \eqref{eq:AdjLieb} is irreducible for all but finitely many $\lambda \in \mathbb{C}$.
\end{coro}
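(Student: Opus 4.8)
The plan is to assemble the pieces already developed and feed them into Theorem \ref{thm2}. Irreducibility of the Fermi variety of $A+V$ at energy $\lambda$ is equivalent to irreducibility of the Laurent polynomial $\mathcal{P}(z;\lambda)$ with $\widetilde{\mathcal{P}}(z;\lambda) = \det(D_z + B_V - \lambda I) = \mathcal{P}(z^{\odot q};\lambda)$, whose existence is guaranteed by Proposition \ref{prop:tcpCharInv}. So it suffices to verify that, for all but finitely many $\lambda$, the hypotheses \ref{assump1}, \ref{assump2}, \ref{assump2.5}, \ref{assumpA'3} of Theorem \ref{thm2} hold with $p_1 = p_2 = 1$, and then rule out the exceptional alternative ($p_1 + p_2 = 2$ components) in the conclusion of that theorem.

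First I would dispatch \ref{assump1}: this is exactly Lemma \ref{Lem:meetsLieb}, which says every factor of $\mathcal{P}(z;\lambda)$ meets $(0,0)$ or $(0,\infty)$ for all but finitely many $\lambda$. Next, \ref{assump2.5} with $p_1 = p_2 = 1$ is exactly Lemma \ref{Lem:compcountLieb}: since $\gcd(q_1,q_2)=1$, $h_0$ and $h_\infty$ are irreducible away from finitely many $\lambda$. It remains to check \ref{assump2} (properness of $\mathcal{P}^+$ and $(\mathcal{P}(\hat x))^+$) and the degree equality \ref{assumpA'3}. For properness, I would argue as in Lemma \ref{Lem:compcountLieb}: the lowest-degree component of $\widetilde{\mathcal{P}}$ is $\widetilde h_0(z) = c_1(\lambda) z_1^Q + c_2(\lambda) z_2^Q$ (up to the monomial normalization), which has at least two monomials with distinct supports once $c_1 c_2 \neq 0$, so $\mathcal{P}^+$ has no monomial factor; the analogous statement at $(0,\infty)$, i.e. for $(\mathcal{P}(\hat x))^+$, uses $\widetilde h_\infty$, which by the same $\gcd$ argument is $c_1'(\lambda) z_1^Q + c_2'(\lambda) z_2^Q$ — here I should double-check by the permutation expansion that the relevant coefficients $c_j'(\lambda)$ are again nonzero polynomials of degree $Q$ in $\lambda$, exactly parallel to the $c_2(\lambda)$ computation via the permutation $\tau$. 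For \ref{assumpA'3}, I would compute $\deg \widetilde{\mathcal{P}}^+$ and $\deg(\widetilde{\mathcal{P}}(\hat x))^+$ directly from the block structure: $\det(D_z - \lambda I) = (-\lambda)^Q \prod_{w} (\text{quadratic in } \mu_w \odot z)$ essentially controls the total degree, and the off-diagonal $B_V$ blocks only lower degree, so one gets explicit values; then check that $\deg \widetilde h_0 + \deg(\widetilde h_\infty(\hat x))^+$ and the mirror quantity hit those numbers on the nose. This degree bookkeeping is the step I expect to be the main obstacle — it is routine in spirit but must be done carefully, tracking how the $q$-periodicity (the passage between $\widetilde{\mathcal{P}}$ and $\mathcal{P}$, and the $\hat x$ reflection in the last variable only) interacts with the Newton polytope of the $2\times 2$-reduced symbol $\det(p(z)-\lambda I) = (-\lambda)(z_1 + z_1^{-1} + z_2 + z_2^{-1} + \lambda^2 + 4)$.

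Finally, having verified the hypotheses with $p_1 = p_2 = 1$, Theorem \ref{thm2} gives that $\mathcal{P}(z;\lambda)$ is irreducible \emph{unless} it has exactly $2$ irreducible components and $\widetilde{\mathcal{P}}^+(z) = C\,\widetilde h_0(z)\,(\widetilde h_\infty(\hat x))^+$. To exclude this degenerate possibility I would compare the two sides: the right-hand side is (a monomial times) a product of two binomials supported on $\{z_1^a, z_2^b\}$-type terms, so it depends on $z_1, z_2$ only through $z_1^{q_2}, z_2^{q_1}$ in a very rigid way and in particular its Newton polytope is a segment or a small parallelogram, whereas $\widetilde{\mathcal{P}}^+$ genuinely involves cross terms $z_1 z_2$ coming from $\det(p(z)-\lambda I)$ for generic $\lambda$ — this is visible already in the $O(z_1^{-(Q-1)})$ correction appearing in the proof of Lemma \ref{Lem:meetsLieb}. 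Pinning down one monomial of $\widetilde{\mathcal{P}}^+$ not of the product form, valid for all but finitely many $\lambda$, contradicts the factorization and completes the proof that $\mathcal{P}(z;\lambda)$, hence the Fermi variety, is irreducible for all but finitely many $\lambda$.
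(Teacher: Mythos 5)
Your proposal is correct and its skeleton coincides with the paper's: reduce to $\mathcal{P}(z;\lambda)$ via Proposition~\ref{prop:tcpCharInv}, feed Lemmas~\ref{Lem:meetsLieb} and \ref{Lem:compcountLieb} into Theorem~\ref{thm2} with $p_1=p_2=1$ after checking properness and the degree identity $\deg\widetilde{h}_0+\deg(\widetilde{h}_\infty(\hat z))^+=3Q=\deg\widetilde{\mathcal{P}}^+$, and then exclude the residual factorization $\widetilde{\mathcal{P}}^+=K\,\widetilde{h}_0\,(\widetilde{h}_\infty(\hat z))^+$. The one genuine divergence is in that last exclusion step. The paper compares degrees in $\lambda$: since each $c_{ij}(\lambda)$ has degree $Q$ and the left side has $\lambda$-degree $3Q$, the factorization forces $K(\lambda)\sim\lambda^Q$, whence the coefficient of $z_2^Q$ on the right is of order $\lambda^{3Q}$ while on the left it is $c_2(\lambda)$, of order $\lambda^Q$ --- a contradiction off an algebraic set. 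You instead propose a support argument: exhibit a monomial of $\widetilde{\mathcal{P}}^+$ whose exponent lies outside the Minkowski sum $\{(0,Q),(Q,0)\}+\{(0,0),(Q,Q)\}=\{(0,Q),(Q,0),(Q,2Q),(2Q,Q)\}$. This works and is arguably cleaner --- the exponent $(Q,Q)$, i.e.\ the constant term of $\widetilde{\mathcal{P}}$, has coefficient $(-\lambda)^{3Q}+\text{l.o.t.}$, which is nonzero for all but finitely many $\lambda$ --- but two of your phrasings need care. First, $(Q,Q)$ is the \emph{center} of the parallelogram spanned by those four support points, so it lies inside the Newton polytope of the product; the argument must be about the support (which coefficients are nonzero), not about the polytope, and your sentence ``its Newton polytope is a segment or a small parallelogram'' would not by itself yield a contradiction. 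Second, the cross terms do not come from $\det(p(z)-\lambda I)=(-\lambda)(z_1+z_1^{-1}+z_2+z_2^{-1}+\lambda^2+4)$ (which has none); they come from expanding the product over $w\in W$ and from $B_V$, and the cleanest witness is simply the constant term $(-\lambda)^{3Q}$. With those two points tightened, your route is a valid and slightly more economical alternative to the paper's $\lambda$-degree comparison.
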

\begin{proof}
In view of the discussion in previous proofs, when the periods $q_1$ and $q_2$ are coprime $(z_1z_2)^Q\det(D_z + B_V - \lambda I)$ has a lowest degree term  of the form
\begin{equation} \label{eq:lieb:tildeh0decomp}
\widetilde h_0(z)=c_{01}(\lambda,V)z_1^{Q}+c_{02}(\lambda,V)z_2^{Q}
\end{equation}
where each $c_{0j}(\lambda,V)$ is a polynomial of degree $Q$ in $\lambda$ that in principle also depends on  $V(l)$ for all $l\in W$.
 In particular, there exists a finite set $\mathcal{Z}_0$ such that for $\lambda\in \bbC\setminus \mathcal{Z}_0$ $\widetilde{h}_0(z,\lambda)$ does not have any monomial factors. Similarly (or by symmetry considerations) one checks that $\widetilde h_\infty(z) = c_{11}(\lambda,V)z_1^{Q} + c_{12}(\lambda,V)z_2^Q$ where each $c_{1j}(\lambda,V)$ is a polynomial of degree $Q$ in $\lambda$ and therefore that $(\widetilde{h}_{\infty}(\hat z))^{+}=c_{12}(\lambda)+c_{11}(\lambda)z_1^{Q}z_2^{Q}$.
 It follows that $(\widetilde{h}_{\infty}(\hat z))^{+}$ also
  does not have any monomial factors for $\lambda\in \bbC\setminus \mathcal{Z}_{0}$.
 In particular
$$
\deg \widetilde{h}_{0}(z)+\deg(\widetilde{h}_{\infty}(\hat z))^+=\deg \widetilde{\mathcal P}^{+}(z)=3Q.
$$
Therefore, for $\lambda \in \bbC\setminus\mathcal{Z}_0$ the assumptions of Theorem \ref{thm2} are met. We conclude that if ${\mathcal P}(z)$ is reducible then we must have
\begin{align}\label{eq:contradictfactor}(z_1 z_2)^{Q}\widetilde{\mathcal P}(z)&=K[c_{01}(\lambda)z_2^{Q}+c_{02}(\lambda)z_1^{Q}]\\&\times [c_{12}(\lambda)+c_{11}(\lambda)z^Q_1z^Q_2].
\end{align} 
where $K=K(\lambda)$ is a rational function of $\lambda$.
 We will show that the above setting leads to a contradiction for all but finitely many $\lambda$. Indeed, the highest power of $\lambda$ on the left-hand side of \eqref{eq:contradictfactor} equals $3Q$. Since  each of the polynomials $c_1(\lambda), c_2(\lambda)$ has degree $Q$, we conclude from this that $K(\lambda)\sim \lambda^Q$  in the sense that $$\lim_{\lambda \to \infty}\frac{K(\lambda)}{\lambda^Q}=C\neq 0.$$ 
 However, this implies that the coefficient of $z^Q_2$ in the right-hand side is of the order of $\lambda^{3Q}$ 
 whereas the coefficient of $z^Q_2$ on the left-hand side is of the order of $\lambda^Q$, according to the proof of Lemma \ref{Lem:compcountLieb}. Thus, this leads to a contradiction unless $\lambda$ belongs to a certain algebraic set. In particular, $\mathcal{P}(z)$ is irreducible for all but finitely many values of $\lambda$.\end{proof}

\section{Applications to Decorated Lattices}\label{Sec:decorated}

Another application of the main results of this note concerns certain graph decorations of $\bbZ^d$ by polygons of $\nu \in \bbN$ vertices.
Let $\calG = (\calV,\calE)$ denote the graph obtained from $\bbZ^d$ by attaching a cyclic graph of order $\nu$ to each $n \in \bbZ^d$. More precisely, the vertices of $\calG$ are given by $\calV = \bbZ \times \bbZ_\nu$ and edges described as follows: 
\begin{itemize}
    \item For any $n,n' \in \bbZ^d$ with $\|n-n'\|_1 = 1$, there is an edge from $(n,0)$ to $(n',0)$
    \item For any $n \in \bbZ^d$ and $j \in \bbZ_\nu$, there is an edge from $(n,j)$ to $(n,j+1)$ and to $(n,j-1)$.
\end{itemize}
Figure~\ref{fig:triangdecor} illustrates this in the case $d=2$ and $\nu = 3$. The black vertices represent $(n,0)$ with $n \in \bbZ^2$ while the blue vertices represent $(n,\pm 1)$. The case when $\nu=2$, where the graph is decorated by lines, is one of the examples of graphs studied by Aizenman and Schenker to create spectral gaps, see \cite[Figure 2]{AS}.

We now let $A$ denote the adjacency operator on $\ell^2(\mathcal{V})$. As in the previous section, one can write this a periodic operator with matrix coefficients. For $\psi \in \ell^2(\bbZ^d,\bbC^\nu)$ we identify the coordinate function $\psi(j,n)$ with the value of $\psi$ on $(n,j-1 \ \mathrm{mod} \ \nu)$. With these definitions, one has that
\begin{equation}\label{eq:decoradj}
[A\psi](n)
=\begin{bmatrix} 
[\Delta \psi_1](n)+\psi_{2}(n)+\psi_{\nu}(n)\\
\psi_{1}(n)+\psi_3(n)\\
\vdots\\
\psi_{\nu-2}(n)+\psi_{\nu}(n)\\
\psi_{\nu-1}(n)+\psi_1(n)
\end{bmatrix},
\end{equation}
where $\Delta$ in the top row represents the adjacency operator on $\ell^2\left(\bbZ^d\right)$.
A portion of the example where $d=2$ and $\nu=3$ is presented in Figure~\ref{fig:triangdecor}.

This time, the diagonal blocks of the Floquet matrix are generated by
\begin{equation}\label{eq:FloquetmatrixDecor}
p(z)=
\begin{bmatrix}
b_0(z) &1& &&& 1\\
1&0&1&&&\\
&1&0&1&&\\
&&\ddots&\ddots&\ddots&\ \\
 &&&1&0&1 \\
1&&&&1&0
\end{bmatrix},
\end{equation}
where $b_0(z)=z_1+z^{-1}_1+\ldots+z_d+z^{-1}_d$ and all unspecified entries are $0$.
We now consider the operator $H = A+V$ with a $q$-periodic potential $V:\bbZ^d \to \bbC$.

We will check below that that the dispersion relation
$\widetilde{\mathcal{P}}(z) = \det(D_z + B_V -\lambda I)$ of $A+V$, with $A$ as in \eqref{eq:decoradj} and $V$ $q$-periodic, satisfies the assumptions of Theorem~\ref{thm1} (respectively \ref{thm2}) whenever $d>2$ (respectively $d=2$) for all but finitely many $\lambda \in \bbC$.  Define $\mathcal{P}$, $\widetilde{h}_0$, $h_0$,   $\widetilde{h}_{\infty}$, and $h_{\infty}$ as in Section \ref{Sec:setting}.

Note that, under the above definitions
\begin{equation}\label{eq:h0zd}
\widetilde h_0(z;\lambda)=\widetilde h_{\infty}(z;\lambda)=s(\lambda)(z_1\cdots z_d)^Q\prod_{n \in W} r_0(\mu_n\odot z)
\end{equation}
with $r_0(z)=z^{-1}_1+\ldots+z^{-1}_d$ and $s(\lambda)$ a polynomial in $\lambda$ of degree $(\nu-1)Q$.

Consequently,
\begin{equation}\label{eq:hinftyzd}
(\widetilde h_{\infty}(\hat z;\lambda))^{+}=s(\lambda)(z_1\cdots z_{d-1})^Q\prod_{n \in W} r_{\infty}(\mu_n\odot z)
\end{equation}
with $r_{\infty}(z)  = r_0(\hat z) =z^{-1}_1+\ldots+z^{-1}_{d-1}+z_d$ and $s(\lambda)$ is as above. In particular, whenever $s(\lambda)\neq 0$ we have that $\widetilde h_0(z)$ and $\widetilde h_{\infty}(z)$ are nonzero.

Recalling that
\begin{equation}\label{eq:h0hinfty}
    \widetilde h_0(z)=h_0(z^{\odot q})\,\, \text{and}\,\, \widetilde h_{\infty}(z)= h_{\infty}(z^{\odot q}),
\end{equation}
 we have the following.
\begin{lemma}\label{lem:lowestcompdecor}
 If $\mathrm{gcd}(q_1,\ldots,q_d)=1$, then $h_0$ and $h_{\infty}$ given by \eqref{eq:h0zd}, \eqref{eq:hinftyzd}, \eqref{eq:h0hinfty} are irreducible for all but finitely many $\lambda$. The finite set of exceptional $\lambda$'s is precisely the set on which $s(\lambda)=0$.    
\end{lemma}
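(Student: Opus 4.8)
The plan is to strip away the $\lambda$-dependence, recognize the remaining $\lambda$-free factor as a Galois norm, and prove that norm irreducible by a descent argument in which $\gcd(q_1,\dots,q_d)=1$ enters at exactly one point. Concretely: by \eqref{eq:h0zd}, $\widetilde h_0(z;\lambda)=\widetilde h_\infty(z;\lambda)=s(\lambda)\,(z_1\cdots z_d)^Q\prod_{n\in W}r_0(\mu_n\odot z)$, and the $z$-factor $(z_1\cdots z_d)^Q\prod_{n\in W}r_0(\mu_n\odot z)$ is invariant under $z\mapsto\mu_m\odot z$ for every $m\in W$ --- the monomial because $(\mu_m^j)^Q=1$ for each $j$, the product because $n\mapsto n+m$ permutes $W$ and $r_0(\mu_{n'}\odot z)$ depends on $n'$ only modulo the period lattice. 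Applying \cite[Lemma~3.1]{FLM1} to this invariant Laurent polynomial yields a polynomial $H_0$, independent of $\lambda$, with $H_0(z^{\odot q})=(z_1\cdots z_d)^Q\prod_{n\in W}r_0(\mu_n\odot z)$; comparing with \eqref{eq:h0zd}--\eqref{eq:h0hinfty} gives $h_0(\cdot\,;\lambda)=h_\infty(\cdot\,;\lambda)=s(\lambda)\,H_0$. Since $s$ is a nonzero polynomial (of degree $(\nu-1)Q$), its zero set $\mathcal Z$ is finite; for $\lambda\in\mathcal Z$ the polynomials $h_0,h_\infty$ vanish identically and hence are not irreducible, while for $\lambda\notin\mathcal Z$ the scalar $s(\lambda)$ is a nonzero constant, so $h_0$ and $h_\infty$ are irreducible exactly when $H_0$ is. It therefore suffices to prove that $H_0$ is irreducible.

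Next I would recognize $H_0$ as a norm. Passing to the variables $T_j:=z_j^{-1}$ turns $r_0(\mu_n\odot z)$ into the linear form $\sum_{j}(\mu_n^j)^{-1}T_j$, and as $n$ ranges over $W$ the tuple $\big((\mu_n^1)^{-1},\dots,(\mu_n^d)^{-1}\big)$ ranges exactly once over the finite abelian group $G:=\mu_{q_1}\times\cdots\times\mu_{q_d}$ of tuples of $q_j$-th roots of unity. Letting $G$ act on $B:=\C[T_1^{\pm1},\dots,T_d^{\pm1}]$ by $\zeta\cdot T_j=\zeta_jT_j$ and setting $\ell:=T_1+\cdots+T_d$, we get $\prod_{n\in W}r_0(\mu_n\odot z)=N(\ell)$, where $N(\ell):=\prod_{\zeta\in G}\zeta\cdot\ell=\prod_{\zeta\in G}\big(\textstyle\sum_j\zeta_jT_j\big)$ is the $G$-norm of $\ell$. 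Now $N(\ell)$ lies in $B^G=\C[T_1^{\pm q_1},\dots,T_d^{\pm q_d}]$, and $(z_1\cdots z_d)^Q=(T_1\cdots T_d)^{-Q}$ is a unit of $B^G$; since $H_0$ is obtained from $(T_1\cdots T_d)^{-Q}\,N(\ell)$ by a Laurent-monomial change of variables (replacing $T_j^{-q_j}$ by $w_j$) together with multiplication by a unit, and such operations preserve irreducibility in a Laurent polynomial ring, it suffices to show $N(\ell)$ is irreducible in $A:=B^G$. A short tally of the $T_j$-degrees of $N(\ell)$ (for each $j$ they run over the multiples of $q_j$ in $[0,Q]$, both endpoints attained) shows moreover that $H_0$ is divisible by no $w_j$, i.e.\ $H_0$ is a \emph{proper} polynomial, so irreducibility of $N(\ell)$ as a Laurent polynomial will give irreducibility of $H_0$ as a polynomial.

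The heart of the matter is then to prove that $N(\ell)$ is irreducible in $A=B^G$, by Galois descent. Each $\ell_\zeta:=\zeta\cdot\ell=\sum_j\zeta_jT_j$ is irreducible in the UFD $B$, being a degree-one polynomial with all coefficients nonzero (here $d\ge2$). For $\zeta\ne\zeta'$ the forms $\ell_\zeta,\ell_{\zeta'}$ are non-associate in $B$: they have the same support $\{T_1,\dots,T_d\}$, so $\ell_\zeta=u\,\ell_{\zeta'}$ with $u$ a unit forces $u$ to be a constant with $\zeta_j=u\,\zeta'_j$ for all $j$, whence $u\in\bigcap_j\mu_{q_j}=\mu_{\gcd(q_1,\dots,q_d)}=\{1\}$ and $\zeta=\zeta'$ --- this is the unique place where $\gcd(q_1,\dots,q_d)=1$ is used. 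Hence $N(\ell)=\prod_{\zeta\in G}\ell_\zeta$ is the prime factorization of $N(\ell)$ in $B$, and $G$ permutes these prime factors simply transitively (as $G$ acting on itself by translation). Now if $N(\ell)=fg$ with $f,g$ non-units of $A$, pick an $A$-irreducible factor $\pi$ of $f$: since $\pi\in A=B^G$ its factorization in $B$ is $G$-invariant, so $S:=\{\zeta\in G:\ell_\zeta\mid\pi\text{ in }B\}$ is a $G$-stable subset of $G$, hence $\varnothing$ or $G$; as $\pi$ is a non-unit (so a non-unit of $B$), $S=G$, i.e.\ $N(\ell)\mid\pi$ in $B$. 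But $\pi\mid f\mid N(\ell)$ in $B$, so $\pi$, $f$, and $N(\ell)$ are associates in $B$, whence $g=N(\ell)/f$ is a unit of $B$ lying in $A$, and therefore a unit of $A$ --- contradicting the choice of $g$. Thus $N(\ell)$ is irreducible in $A$, completing the proof.

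The step I expect to require the most care is this descent argument: one must keep straight the difference between units of $B$ and of $A$ and carry out the associate-bookkeeping carefully, and the whole thing hinges on the simple transitivity of the $G$-action on the prime factors of $N(\ell)$, which is precisely what the coprimality hypothesis provides --- if $\gcd(q_1,\dots,q_d)=g>1$, then $(\omega,\dots,\omega)\in G$ with $\omega$ a primitive $g$-th root of unity fixes $\ell$ up to the unit $\omega$, so $N(\ell)$ is a perfect $g$-th power and hence reducible, showing the hypothesis is sharp. A secondary, routine point is the degree count establishing that $H_0$ is a proper polynomial, so that Laurent irreducibility upgrades to polynomial irreducibility.
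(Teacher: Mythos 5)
Your argument is correct. Be aware, though, that the paper does not prove this lemma in-house: its entire proof is the citation ``See \cite[Lemma 5.1]{LiuPreprint:Irreducibility}'', which treats the $\lambda$-free product $\prod_{n\in W}r_0(\mu_n\odot z)$ already arising for the $\bbZ^d$ lattice (the prefactor $s(\lambda)$ is a scalar affecting only the exceptional set). So what you have written is a self-contained replacement for that external reference rather than a variant of an argument printed in the paper. Your route --- factoring out $s(\lambda)$, recognizing the remaining $\lambda$-free factor as the $G$-norm of the linear form $\ell=T_1+\cdots+T_d$ over the group $G$ of tuples of $q_j$-th roots of unity, and running Galois descent in the invariant ring $B^G\cong\C[w_1^{\pm1},\ldots,w_d^{\pm1}]$ --- checks out at every step: the forms $\ell_\zeta$ are pairwise non-associate primes of $B$ because a constant relating $\ell_\zeta$ to $\ell_{\zeta'}$ would have to be a common $q_j$-th root of unity for every $j$, hence trivial when $\gcd(q_1,\ldots,q_d)=1$; $G$ permutes these primes simply transitively; and a $G$-invariant non-unit divisor must therefore absorb the whole orbit. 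The sharpness remark (for $\gcd=g>1$ the norm is a constant times a $g$-th power, hence reducible) is a bonus not present in the paper. Two minor points: for properness of $H_0$ you only need the top $T_j$-exponent $Q$ to be attained in $N(\ell)$, with coefficient $\prod_{\zeta\in G}\zeta_j\neq0$ (the bottom endpoint is not needed); and the assertion that the exceptional set is exactly $\{\lambda : s(\lambda)=0\}$ rests on the convention, consistent with the paper's Definition of irreducibility for $f\not\equiv 0$, that the identically vanishing polynomial is not irreducible.
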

\begin{proof}
See~\cite[Lemma 5.1]{LiuPreprint:Irreducibility}.
\end{proof}
\begin{lemma} Let $\widetilde{\mathcal{P}}(z) = \det(D_z + B_V- \lambda I)$, and let $\mathcal{P}(z)$ satisfy $\widetilde {\mathcal{P}}(z)=\mathcal{P}(z^{\odot q})$ for all $z\in \mathbb{C}$. Then, for all but finitely many values of $\lambda$, each factor of $\mathcal{P}(z)$  meets either $\mathbf{0}_d$ or $(\mathbf{0}_{d-1},\infty)$. 

\end{lemma}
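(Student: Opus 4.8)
The strategy mirrors the proof of Lemma~\ref{Lem:meetsLieb} in the Lieb case, but we must track the more elaborate asymptotics coming from the cyclic decoration. First I would fix $\lambda$ outside the finite exceptional set $\{s(\lambda)=0\}$ (enlarging it later as needed). The key structural input is the explicit form of the lowest-degree term of $\widetilde{\mathcal{P}}(z)=\det(D_z+B_V-\lambda I)$: expanding the determinant and using the block-diagonal structure of $D_z$ together with the Fourier-side form \eqref{eq:offdiagLiebnd}-type blocks of $B_V$, the terms of minimal degree are governed by the product over $n \in W$ of the lowest-degree parts $r_0(\mu_n\odot z)$, up to the $\lambda$-dependent scalar $s(\lambda)$, exactly as recorded in \eqref{eq:h0zd}. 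Thus $\widetilde h_0(z)= s(\lambda)(z_1\cdots z_d)^Q\prod_{n\in W} r_0(\mu_n\odot z)$, and the analogous statement holds near the face at infinity in the last coordinate, giving $(\widetilde h_\infty(\hat z))^+$ as in \eqref{eq:hinftyzd}.

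Next I would argue as follows. Suppose $z(n)=(z_1(n),\ldots,z_d(n)) \in V(\widetilde{\mathcal{P}})$ with $z(n)\to y=(y_1,\ldots,y_d)\in\overline{\bbC}^d$ and suppose $y\notin \set{\mathbf{0}_d} \cup \set{(\mathbf{0}_{d-1},\infty)}$; I want a contradiction for all but finitely many $\lambda$. The point is that the limit $y$ must sit on some face of the Newton polytope in order for $\widetilde{\mathcal{P}}(z(n))$ to stay bounded (indeed, to stay $0$): writing $\widetilde{\mathcal{P}}$ as a sum of monomials $c_\alpha z^\alpha$, along the sequence $z(n)$ the dominant monomials are those maximizing (or minimizing, depending on which coordinates of $y$ are $0$ versus $\infty$) the relevant weighted degree, and for $\widetilde{\mathcal{P}}(z(n))$ not to blow up the corresponding facial polynomial must vanish at $y$. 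The facial polynomials relevant to a limit point with some coordinates $0$ and some $\infty$ are, up to monomial factors and the scalar $s(\lambda)$, products of shifted copies of $r_0(z)=z_1^{-1}+\cdots+z_d^{-1}$ or of mixed forms $z_{i_1}^{-1}+\cdots+z_{i_k}^{-1}+z_{j_1}+\cdots$; since $s(\lambda)\neq 0$, $y$ must be a zero of one of these linear-type forms in the $z_j^{\pm 1}$. A finite case analysis over which coordinates of $y$ are $0$, which are $\infty$, and which are in $\bbC^\star$, shows that the only way such a facial polynomial can vanish while $y$ is not of the form $\mathbf{0}_d$ or $(\mathbf{0}_{d-1},\infty)$ is ruled out — for instance a pure form $\sum z_{i}^{-1}$ forces all $z_i(n)\to\infty$ which is incompatible with $y_1=0$ unless we are in a genuinely degenerate configuration that the coprimality $\gcd(q_1,\ldots,q_d)=1$ and the averaging $r_0(\mu_n\odot z)$ exclude. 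Concretely, one shows $\widetilde{\mathcal{P}}(z(n))$ has leading behavior $s(\lambda)\cdot(\text{nonzero})\cdot z_1(n)^{-Q}$ (or a similar monomial) whenever $y_1=0$ but $y_2,\ldots,y_d$ are not all in $\set{0,\infty}$ appropriately, contradicting membership in the variety for $\lambda$ outside a finite set.

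Finally, since $z(n)\to \mathbf{0}_d$ if and only if $(z(n))^{\odot q}\to \mathbf{0}_d$, and likewise $z(n)\to(\mathbf{0}_{d-1},\infty)$ iff $(z(n))^{\odot q}\to(\mathbf{0}_{d-1},\infty)$ (coordinatewise, since each $q_j\geq 1$), the conclusion transfers from $\widetilde{\mathcal{P}}$ to $\mathcal{P}$: every irreducible factor of $\mathcal{P}$, having a nonempty variety whose closure contains some point with a zero or infinite coordinate, in fact meets one of $\mathbf{0}_d$ or $(\mathbf{0}_{d-1},\infty)$. One should also note that a factor of $\mathcal{P}$ whose variety's closure misses both points would have to have all coordinates bounded and bounded away from $0$, forcing it to be a monomial (hence not a genuine factor) or contradicting that it defines a nonempty hypersurface — this is the same reasoning used implicitly in the Lieb argument.

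\textbf{Main obstacle.} The delicate point is the facial-polynomial case analysis in the second paragraph: one must be sure that among \emph{all} faces of the Newton polytope of $\widetilde{\mathcal{P}}$ whose facial polynomial could vanish at a point with a mixed pattern of $0$, $\infty$, and $\bbC^\star$ coordinates, none actually does so for generic $\lambda$ — this is where the specific algebraic form \eqref{eq:h0zd}-\eqref{eq:hinftyzd} of the asymptotics, the nonvanishing of $s(\lambda)$, and the coprimality hypothesis all get used together, and it is easy to overlook an intermediate face. The bookkeeping is essentially the content of \cite[Lemma~5.1]{LiuPreprint:Irreducibility} and the preceding lemmas, so I would lean on those; the rest is the soft topological transfer from $\widetilde{\mathcal{P}}$ to $\mathcal{P}$.
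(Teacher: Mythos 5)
The central gap is that your argument is aimed at the wrong statement and omits the one step that actually needs proof. The lemma asserts that the closure of each irreducible factor's variety \emph{contains} one of the two points $\mathbf{0}_d$, $(\mathbf{0}_{d-1},\infty)$; you instead try to show that \emph{no other} boundary point of $\overline{\bbC}^d$ can be a limit of points of $V(\widetilde{\mathcal{P}})$. That is strictly stronger than needed (the closure may well contain other degenerate points without affecting the lemma), it is the sole source of your admitted ``intermediate face'' difficulty, and even if completed it would not finish the proof: you would still need to produce, for \emph{each} factor $f$, at least one boundary limit point of $V(f)$ with the first $d-1$ coordinates equal to $0$. Your closing remark that a factor missing both points ``would have to have all coordinates bounded and bounded away from $0$'' is false: $V(z_1-1)$ is a nonempty, non-monomial hypersurface in $(\C^\star)^d$ whose closure is $\{1\}\times\overline{\bbC}^{d-1}$ and hence misses both $\mathbf{0}_d$ and $(\mathbf{0}_{d-1},\infty)$. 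Excluding such factors requires the determinantal structure: the extreme coefficients of $\widetilde{\mathcal{P}}$ as a Laurent polynomial in $z_d$ are nonzero constants, so any factor not involving $z_d$ divides a unit, and every non-unit factor has roots $z_d\in\C^\star$ over \emph{every} base point $(z_1,\dots,z_{d-1})\in(\C^\star)^{d-1}$ — in particular over a sequence of base points tending to $\mathbf{0}_{d-1}$. This is Step 1 of the scheme of \cite{LiuPreprint:Irreducibility}, and it is the ingredient your proposal is missing.

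Once that sequence exists, the paper needs only a single case, not your full facial analysis: take $(z_1(n),\dots,z_{d-1}(n))\to\mathbf{0}_{d-1}$ (chosen, say, with $z_j(n)=\epsilon_n^{j}$ so that $z_1^{-Q}$ strictly dominates and no cancellation among the $z_j^{-Q}$ can occur) and $z_d(n)\to y_d$; if $y_d\in\C^\star$ then $\widetilde{\mathcal{P}}(z(n))=s(\lambda)\bigl(z_1(n)^{-Q}+\cdots+z_{d-1}(n)^{-Q}\bigr)+\mathrm{l.o.t.}\to\infty$ whenever $s(\lambda)\neq 0$, contradicting $\widetilde{\mathcal{P}}(z(n))=0$, so $y_d\in\{0,\infty\}$ and the factor meets $\mathbf{0}_d$ or $(\mathbf{0}_{d-1},\infty)$. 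Note also that your intermediate claim — that the leading behavior is $s(\lambda)\cdot(\text{nonzero})\cdot z_1(n)^{-Q}$ as soon as $y_1=0$ — is not justified when only some coordinates degenerate: the coefficient of the extremal power of $z_1$ is then a nonconstant Laurent polynomial in $z_2,\dots,z_d$ and $\lambda$ which may vanish at the limit point, so no blow-up follows and the contradiction evaporates. Your final transfer from $\widetilde{\mathcal{P}}$ to $\mathcal{P}$ via $z\mapsto z^{\odot q}$ is correct and matches the paper.
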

\begin{proof}
If $\widetilde{\mathcal{P}}(z)=0$ with $(z_1,\ldots,z_{d-1})\to  \mathbf{0}_{d-1}$ then we must have that $z_d\to 0$ or $z_d\to \infty$ since otherwise we would have $\widetilde{\mathcal{P}}(z)=s(\lambda)(z^{-Q}_1+\ldots+z^{-Q}_{d-1})+\text{l.o.t}$ with $s(\lambda)$ the polynomial from \eqref{eq:h0zd}. In particular whenever $s(\lambda)\neq 0$ this would imply that $\widetilde{\mathcal{P}}(z)\to \infty$, which is a contradiction. It follows as in the proof of Lemma~\ref{Lem:meetsLieb} that each factor of ${\mathcal{P}}(z)$ meets either $\mathbf{0}_d$ or $(\mathbf{0}_{d-1},\infty)$.
\end{proof}

\begin{coro} Let $d\geq 2$ and assume that $\mathrm{gcd}(q_1,\ldots,q_d)=1$. Then the Fermi variety of $A+V$, with $A$ given by \eqref{eq:decoradj} is irreducible for all but finitely many $\lambda\in \bbC$.
\end{coro}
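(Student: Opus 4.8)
The plan is to reduce the statement to an application of Theorem~\ref{thm1} (for $d>2$) and Theorem~\ref{thm2} (for $d=2$), exactly parallel to the treatment of the Lieb lattice in Corollary~\ref{thm:Liebd}. The three lemmas immediately preceding this corollary already supply the three main ingredients: the preceding ``meets'' lemma verifies assumption~\ref{assump1} for all but finitely many $\lambda$; Lemma~\ref{lem:lowestcompdecor} shows $h_0$ and $h_\infty$ are irreducible, hence $p_1 = p_2 = 1$, for $\lambda$ outside the zero set of $s(\lambda)$, which is assumption~\ref{assump2.5}; and formulas \eqref{eq:h0zd}--\eqref{eq:hinftyzd} give explicit control of the asymptotic polynomials. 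So the remaining work is to verify assumption~\ref{assump2} (properness of $\mathcal{P}^+$ and $(\mathcal{P}(\hat x))^+$) and to compare degrees to decide between \ref{assump3} and \ref{assumpA'3}.

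First I would compute $\deg \widetilde{\mathcal{P}}^+$. Expanding $\det(D_z + B_V - \lambda I)$ and tracking the diagonal blocks $p(\mu_n \odot z)$ with $b_0(\mu_n\odot z) = \sum_j (\mu_n^j z_j + (\mu_n^j z_j)^{-1})$, one sees the top-degree monomial in each block contributes degree $2$ from the $b_0$ term (before clearing denominators) across the $Q$ blocks; after multiplying by $(z_1\cdots z_d)^Q$ to pass to $\widetilde{\mathcal{P}}^+$, the total degree should be $2Q + (\nu-2)Q \cdot(\text{contribution of the cyclic part})$ — more carefully, one reads off from \eqref{eq:FloquetmatrixDecor} that a generic diagonal block has determinant a polynomial whose highest $z$-degree piece comes from the $b_0(z)$ entry times the determinant of the cyclic $(\nu-1)\times(\nu-1)$ minor, so that after clearing denominators $\deg\widetilde{\mathcal{P}}^+ = (\nu+1)Q$; and likewise $\deg (\widetilde{\mathcal{P}}(\hat z))^+ = (\nu+1)Q$ by the symmetry $z_d \leftrightarrow z_d^{-1}$ that preserves the structure of $b_0$ up to relabeling. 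On the other side, from \eqref{eq:h0zd} we have $\deg\widetilde h_0 = (\nu-1)Q + Q = \nu Q$ (the factor $s(\lambda)$ contributes no $z$-degree, the factor $(z_1\cdots z_d)^Q$ contributes $dQ$, and the product $\prod_n r_0(\mu_n\odot z)$ contributes $-dQ$ once combined — I would need to recompute: $\underline{\widetilde h_0}$ is the lowest-degree component so $\deg\widetilde h_0 = $ that common low degree, which per the definitions equals the $l$-degree data; I'd extract it directly from $r_0(z) = z_1^{-1}+\cdots+z_d^{-1}$ giving $\widetilde h_0 = s(\lambda)(z_1\cdots z_d)^Q \prod_n(\cdots)$ of degree $dQ - (\text{something})$). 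The key qualitative point is that \eqref{eq:hinftyzd} shows $(\widetilde h_\infty(\hat z))^+$ has $z$-degree larger than that of $\widetilde h_\infty$ itself by $Q$ (the variable $z_d^{-1}$ becomes $z_d$ and must be cleared), precisely as in the Lieb case, and the two sums $\deg\widetilde h_0 + \deg(\widetilde h_\infty(\hat z))^+$ versus $\deg\widetilde{\mathcal{P}}^+$ will come out equal when $d=2$ and strictly greater when $d>2$, because the ``$b_0$'' term carries $2$ units of degree per block in all directions but only $d$ of the $d$ coordinate directions appear, so increasing $d$ increases $\deg\widetilde{\mathcal{P}}^+$ more slowly relative to the asymptotic product when $d\ge 3$. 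I expect this degree bookkeeping — getting every exponent exactly right, including the role of $s(\lambda)$ and of the $\mu_n$-shifts — to be the main obstacle, just as it was the crux of Corollary~\ref{thm:Liebd}.

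Once the degree inequality \ref{assump3} is established for $d>2$, Theorem~\ref{thm1} with $p_1 = p_2 = 1$ gives that $\mathcal{P}(z)$ is irreducible for $\lambda$ outside a finite set (the union of the exceptional sets from the three lemmas), which is the claim. For $d=2$ we are in the equality case \ref{assumpA'3}, so Theorem~\ref{thm2} gives that $\mathcal{P}(z)$ is either irreducible or has exactly $2$ components with $\widetilde{\mathcal{P}}^+ = C\widetilde h_0 \cdot (\widetilde h_\infty(\hat z))^+$; I would then rule out the latter by a $\lambda$-degree argument identical in spirit to the end of Corollary~\ref{thm:Liebd}: comparing the highest power of $\lambda$ on the two sides of the putative factorization — the left side has $\lambda$-degree $(\nu+1)Q$ coming from the full determinant, while $\widetilde h_0$ and $(\widetilde h_\infty(\hat z))^+$ each carry $\lambda$-degree $(\nu-1)Q$ from the factor $s(\lambda)$, so the product carries $\lambda$-degree $2(\nu-1)Q$, and for $\nu \ge 2$ this forces $(\nu+1)Q = 2(\nu-1)Q$, i.e. $\nu = 3$ (a possibility that must then be excluded by examining a specific lower-order coefficient), and is otherwise an immediate contradiction. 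Assembling these cases completes the proof; throughout, ``all but finitely many $\lambda$'' refers to the complement of $\{s(\lambda) = 0\}$ together with finitely many extra values arising from the non-vanishing conditions in the ``meets'' lemma and the final $\lambda$-degree comparison.
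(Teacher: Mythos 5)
Your overall architecture matches the paper's: Theorem~\ref{thm1} for $d>2$, Theorem~\ref{thm2} for $d=2$, with the two preceding lemmas supplying \ref{assump1} and $p_1=p_2=1$. However, there are two genuine problems. First, the degree bookkeeping you defer is not just tedious but is where your stated numbers go wrong: $\deg\widetilde{\mathcal{P}}^+=(d+1)Q$ (not $(\nu+1)Q$ --- the only $z$-dependence sits in the single $b_0$ entry of each of the $Q$ diagonal blocks, so the cyclic part contributes $\lambda$- but no $z$-degree), and from \eqref{eq:h0zd}--\eqref{eq:hinftyzd} one gets $\deg\widetilde h_0=(d-1)Q$ and $\deg(\widetilde h_\infty(\hat z))^+=dQ$ (not $\nu Q$). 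The comparison $(2d-1)Q$ versus $(d+1)Q$ is what yields strict inequality for $d>2$ and equality for $d=2$; your guessed exponents would not produce this dichotomy, so the $d>2$ case is not actually established as written.

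Second, and more seriously, your exclusion argument for $d=2$ fails. Theorem~\ref{thm2} is applied at each fixed $\lambda$, so the ``constant'' $C$ in the forbidden factorization is really a function $K(\lambda)$ (the paper treats it as a rational function of $\lambda$ in \eqref{eq:setupdecorproof}). Comparing leading $\lambda$-degrees of the two sides therefore gives no contradiction: $K(\lambda)$ simply absorbs the discrepancy between $\nu Q$ (the correct $\lambda$-degree of $\det(D_z+B_V-\lambda I)$, not $(\nu+1)Q$) and $2(\nu-1)Q$. Your claim that this ``forces $\nu=3$'' is thus unfounded, and you supply no replacement argument. The paper's actual mechanism is different and essential: restrict to the line $z_2=-z_1$. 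Since $r_0(z_1,-z_1)=z_1^{-1}-z_1^{-1}=0$, the right-hand side of \eqref{eq:setupdecorproof} vanishes identically there, while the left-hand side retains a term $t(\lambda)z_1^{Q-1}$ with $t(\lambda)$ a nonzero polynomial in $\lambda$ (here $\gcd(q_1,q_2)=1$ is used to ensure $r_0(\mu_n\odot(z_1,-z_1))\not\equiv 0$ for $n\neq\mathbf{0}_d$). This forces $t(\lambda)=0$, which happens for only finitely many $\lambda$. Without this substitution (or some equivalent coefficient comparison beyond leading $\lambda$-order), your $d=2$ case has a gap.
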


\begin{proof}
    If $d>2$ then this result follows from Theorem \ref{thm1}. Indeed, in this case we have that $\deg \widetilde h_0(z)=(d-1)Q$ and $\deg ((\widetilde{h}_{\infty}(\hat z))^{+})=Qd$ by \eqref{eq:h0zd} and \eqref{eq:hinftyzd}. In particular, 
    $$
    \deg \widetilde h_0(z)+ \deg ((\widetilde{h}_{\infty}(\hat z))^{+})=(2d-1)Q>(d+1)Q= \deg \widetilde{\mathcal{P}}^{+}(z),
    $$ 
    which verifies assumption \ref{assump3} as $d>2$. Moreover, Lemma \ref{lem:lowestcompdecor} implies that $p_1=p_2=1$. 
    
    For $d=2$, we have $(2d-1)Q=(d+1)Q$ 
 hence the above considerations imply assumption \ref{assumpA'3} so one only needs to check that the factorization 
$$\widetilde{\mathcal{P}}^{+}(z;\lambda)
 =K(\lambda) {\widetilde{h}}_0(z;\lambda) ({\widetilde{h}}_{\infty}(\hat{z};\lambda))^+
$$
cannot happen. In fact, this would imply that
    \begin{equation}\label{eq:setupdecorproof}
    \begin{split}
        &(z_1z_2)^Q\det(  D_z+B_V-\lambda I) \\
        & =K(\lambda)s^2(\lambda)(z_1z_2)^Qz^Q_1\left(\prod_{n \in W}r_0(\mu_n\odot z)\right)\left(\prod_{n \in W}r_{\infty}(\mu_n\odot z)\right).
\end{split}
    \end{equation}
Let us show that this can happen for at most finitely many $\lambda \in \bbC$. First, notice that $\gcd(q_1,q_2)=0$ implies that $r_0(\mu_n\odot (z_1,-z_1))$ is not identically zero for any $n\in W\setminus\{ \mathbf{0}_d\}$. In particular, setting $z_2=-z_1$ and expanding $\widetilde{\mathcal{P}}(z_1,-z_1;\lambda)$, we see that the highest power of $z_1$ in $\widetilde{\mathcal{P}}(z_1,-z_1;\lambda)$ is of the form $t(\lambda)z^{Q-1}_1$ where $t(\lambda)$ is a polynomial of degree $\nu+(\nu-1)(Q-1)$ in $\lambda$. On the other hand, if \eqref{eq:setupdecorproof} holds for some $\lambda$, using $r_0(z_1,-z_1)=\frac{1}{z_1}+\frac{1}{-z_1}=0$,  yields 
\[\widetilde{\mathcal{P}}^+(z_1,-z_1;\lambda) = 0, \ \forall z_1 \in \bbC.\]
 In particular \eqref{eq:setupdecorproof} can only be true when $t(\lambda)=0$, which in turn is true only for finitely many $\lambda$. Therefore, $\mathcal{P}(z;\lambda)$ is irreducible for all but finitely many $\lambda$.
\end{proof}

\begin{figure}

\begin{tikzpicture}[scale=3]
\def\width{2};
\def\height{3};
\def \length {2};
\foreach \m in {0,...,\width}
   \foreach \p in {0,...,\length}
        \draw[fill=black] (\m,0,\p) circle (2pt);
        \foreach \m in {0,...,\width}
         \foreach \p in {0,...,\length}
        \draw[fill=blue] (\m+0.3,0.3,\p) circle (1.7pt);
        \foreach \m in {0,...,\width}
         \foreach \p in {0,...,\length}
        \draw[fill=blue] (\m-0.3,0.3,\p) circle (1.7pt);
\foreach \m in {0,...,\numexpr\width }
    \draw[thick] (\m,0,0) -- (\m ,0, \width);
    \foreach \p in {0,...,\numexpr\length }
    \draw[thick] (\length,0,\p) -- (0 ,0, \p);
    \foreach \m in {0,...,\width}
         \foreach \p in {0,...,\length}
         \draw[thick] (\m-0.3,0.3,\p) -- (\m+0.3,0.3,\p);
          \foreach \m in {0,...,\width}
         \foreach \p in {0,...,\length}
         \draw[thick] (\m-0.3,0.3,\p) -- (\m,0,\p);
         \foreach \m in {0,...,\width}
         \foreach \p in {0,...,\length}
         \draw[thick] (\m+0.3,0.3,\p) -- (\m,0,\p);

\end{tikzpicture}

\caption{A $\bbZ^2$ decoration by triangles}
\label{fig:triangdecor}
\end{figure}
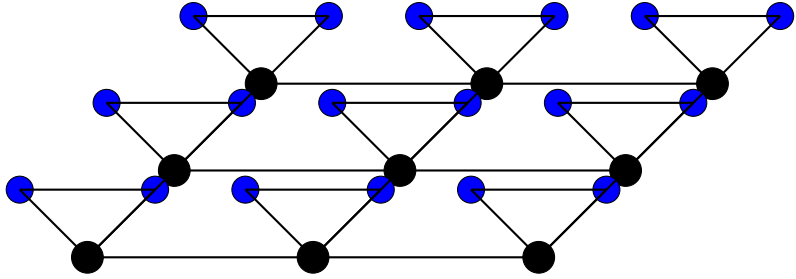

\end{document}